\DeclareFontFamily{U}{mathx}{\hyphenchar\font45}
\DeclareFontShape{U}{mathx}{m}{n}{
      <5> <6> <7> <8> <9> <10> gen * mathx
      <10.95> mathx10 <12> <14.4> <17.28> <20.74> <24.88> mathx12
      }{}
\DeclareSymbolFont{mathx}{U}{mathx}{m}{n}
\DeclareMathSymbol{\bigplus}{1}{mathx}{"A0}
\DeclareMathSymbol{\bigtimes}{1}{mathx}{"A1}
\def\la{\langle}\def\ra{\rangle}
\newcommand{\rank}{{\operatorname{rank}}}
\newcommand{\class}{{\operatorname{c}}}
\newcommand{\Aut}{{\operatorname{Aut}}}
\def\irr#1{{\rm Irr}(#1)}
\def\cent#1#2{{\bf C}_{#1}(#2)}
\def\syl#1#2{{\rm Syl}_#1(#2)}
\def\norm#1#2{{\bf N}_{#1}(#2)}
\def\oh#1#2{{\bf O}_{#1}(#2)}
\def\fitt#1{{\bf F}(#1)}
\def\z#1{{\bf Z}(#1)}
\def\nor{\trianglelefteq}
\def\V#1{{\bf{V}}(#1)}
\def\N#1{{\bf{N}}(#1)}
\def\P#1{{\rm P}_{\bf{ v}}(#1)}
\def\o#1{\overline{#1}}
\newtheorem*{thmA}{\bf Theorem A}
\newtheorem{lem}{ \bf Lemma}[section]
\newtheorem{pro}[lem]{\bf Proposition}
\newtheorem{thm}[lem]{\bf Theorem}
\newtheorem{rem}[lem]{\bf Remark}
\newtheorem{cor}[lem]{\bf Corollary}
\newtheorem{hy}[lem]{\bf Setting}
\title{Finite groups with a small proportion of vanishing elements
\thanks{{\bf Acknowledgement:} The first and second authors gratefully acknowledge the support of China Scholarship Council (CSC) and the first author was supported by the Natural Science Foundation for the Universities in Jiangsu Province (No. 23KJB110002). The third author was partially supported by INDAM-GNSAGA.}
}
\author{ 
   Dongfang Yang\footnote{Department of  Mathematics, Changshu Institute of Technology, Changshu, China. email: dfyang1228@163.com},
   Yu Zeng\footnote{Department of  Mathematics, Changshu Institute of Technology, Changshu, China. email: yuzeng2004@163.com}
  and Silvio Dolfi\footnote{Dipartimento di Matematica e Informatica, Universit\`a di Firenze, Italy. email: silvio.dolfi@unifi.it}}
\date{}
\begin{document}
\maketitle

\vskip 1cm

\begin{abstract}
  The function $\P G$, measuring the proportion of the elements of a finite group $G$ that are zeros
  of irreducible characters of $G$, takes (as proved in ~\cite{ZYD2022}) only values $\frac{m-1}{m}$, for $1 \leq m \leq 6$,
  in the interval $[0, \P {A_7})$.
  In this paper, we give a complete classification of the finite groups $G$ such that $\P G=\frac{m-1}{m}$ for $m=1,2,\cdots ,6$.
\end{abstract}

\vskip 5cm

\bigskip

\textbf{Keywords}\, Character theory, zeros of irreducible characters.

\textbf{2020 MR Subject Classification}\,\, 20C15. 
 \pagebreak

\section{Introduction}
We say that an element $g$ of a finite group $G$ is a \emph{vanishing} element of $G$ if there exists an irreducible character $\chi$ of $G$ such that
$\chi(g) = 0$, and we denote by $\V G$ the set of the vanishing elements of $G$.

A natural way to measure the relative abundance of vanishing elements in a finite group $G$ is to consider the proportion
$$\P G = \frac{|\V G|}{|G|}$$
which can  be seen as the probability  of getting a vanishing element by picking at random (with uniform distribution) an element in $G$.
Relevant questions in this context are those concerning  the possible values of the function $\P{G}$,
and the conditions on the structure of a group $G$ that are related to specific values of $\P G$.
We recall that by Burnside's classical theorem on zeros of irreducible characters, $G$ is abelian if and only if $\P G = 0$.
Strengthening this result, in~\cite{MTV} L. Morotti and H. Tong-Viet  show that a finite group $G$ is abelian if and only if $\P G < \frac{1}{2}$, and
that $\P G = \frac{1}{2}$ if and only if  $G$ is an $\mathcal{A}$-group (i.e. a solvable group whose Sylow subgroups are all abelian) and  $G/\z G$ is a Frobenius group with complement of order $2$.
They also prove (\cite[Theorem 1.6]{MTV}) that if $\P G \leq \frac{2}{3}$, then
$\P G \in \{0,  \frac{1}{2}, \frac{2}{3} \}$ and conjecture that,  for $G$ in the class of finite groups,  the only values of  $\P G$ smaller than $\P{A_7}$, where $A_7$ is the alternating group on $7$ letters,  are
of the form $\frac{m-1}{m}$ for some integer $1 \leq m \leq 6$.
This conjecture has been confirmed in~\cite{ZYD2022}, after that  A. Moret\'o and P.H. Tiep proved in~\cite{MT}  that
a group $G$ is necessarily a solvable if $\P G < \P{A_7} =  \frac{1067}{1260} \simeq 0.846$. 

In this paper, we give a complete classification of the  finite groups $G$ such that $\P G <\P{A_7}$.
Before stating  the main result of this paper, we need to introduce some notation.  
As usual, for an abelian group $A$ and a group $H$, we say that $A$ is an $H$-module if it is given a group homomorphism from
$H$ to $\rm{Aut}(A)$.
 We say that $A$ is a \emph{cyclic} $H$-module if $A$ is generated, as $H$-module, by a single element of $A$.

For a group $H \cong C_6$ (where $C_n$ is  the cyclic group of order $n$) and an abelian $2$-group $A$, we say that
\begin{description}
\item[(T1)] $A$ is \emph{of type (T1)} if $A$ is a cyclic $H$-module of rank $2$, $A \cong C_8 \times C_8$ and, for suitable $x, y \in H$ such that  $o(x) = 3$ and $o(y) = 2$, $\cent Ax = 1$ and $aa^y = (a^4)^x$ for all $a \in A$. 
  
\item[(T2)] $A$ is \emph{of type (T2)} if $A$ is a cyclic $H$-module of rank $4$, $A = A_0A_1$, with $A_0  \cong C_{2^n} \times C_{2^n}$,
  $2^n \geq 8$, $A_1 = [A_0, y] \cong C_4 \times C_4$ and, for suitable $x, y \in H$ such that  $o(x) = 3$ and $o(y) = 2$,
  $\cent Ax = 1$ and $[a^2, y]^x  = a^{2^{n-1}}$ for all $a \in A$.
\end{description}

We say that an abelian $2$-group $A$ is a \emph{homogeneous $H$-module of type (T1) (resp. (T2))}, where $H \cong C_6$,  if $A$ is a direct sum of isomorphic
  $H$-modules of type (T1) (resp. (T2)).

\begin{thmA}
  Let $G$ be a finite group, $Q\in\syl{2}{G}$ and $P\in\syl{3}{G}$.
  Then $\P G<\P{A_7}$ if and only if 
  \begin{description}
  \item[(a)] $G$ is an $\mathcal{A}$-group such that  $[G:\fitt{G}]=m\leq 6$,  and  
   $|\fitt{G}/\z{G}|$ is not divisible by $6$ if $m = 5$.
 \item[(b)]  $Q$ is nonabelian and $G$ has an abelian normal subgroup $A$ such that  
    $Q\cap A  = Z_0 \times D$,
    where $Z_0 = \cent{Q \cap A}P  \leq \z{G}$  and $D = [Q \cap A, P]$, 
  and  one of the following holds:
  \begin{description}
  \item[(b1)] $|G/A|=4$ and $Q \cap A=\z{Q}$. 
 \item[(b2)] $G/A\cong S_3$,  
   and, for some  $x\in P- A$,   $1 \neq D =Z\times Z^{x}$, where $Z=\cent{D}{Q}$ is either elementary abelian or 
   the direct product of a cyclic group of order $4$ and a possibly trivial elementary abelian group.
 \item[(b3)]  $G/A\cong C_6$, 
   $1 \neq D = B\times C$ with $B$ and $C$ are normal subgroups of  $G$ such that  
   $[C,Q,Q]=1$ and if  $B \neq 1$, then  $\exp (B)>\exp (C)$ and either every $y \in Q -A$ acts as the inversion on $C$ and 
   $B$ is a homogeneous $G/A$-module of type {\rm (T1)},  or $B$ is a homogeneous $G/A$-module of type {\rm (T2)}. 
\end{description}
\end{description}
\end{thmA}

In Theorem~\ref{Asmall} and Corollary~\ref{corAgr}, for completeness,  we describe in full detail the groups that satisfy condition
(a) of Theorem~A.

The paper is organized as follows: in Section 2 and Section 3 we collect several preliminary results, while in Section 4
we prove Theorem~A and Theorem~\ref{Asmall}.

Our notation is standard and for character theory   follows~\cite{isaacs1976}.
All groups considered in the paper will be assumed to be finite groups.

\section{Preliminaries}
Let $G$ be a finite group. 
As already mentioned, we write
$$\V G = \{ g \in G \mid \chi(g) = 0 \text{ \ for  \ some \ } \chi \in \irr G\},$$ 
and we denote by $\N G = G - \V G$
its complement in $G$, i.e. the set of the \emph{non-vanishing} elements of $G$. 

We remark that, given a normal subgroup $N$ of a group $G$ and $g \in G$, if $gN \in \V{G/N}$ then
$gN \subseteq \V G$. This implies that $\P{G/N} \leq \P G$, a fact that will be used freely in the rest of the paper. 

We collect some useful results about vanishing elements. 

\begin{lem}[\mbox{\cite[Main Theorem]{lam2000}}]\label{vs}
  Let $m, n$ be a positive integers, $ \alpha_1, \alpha_2, \ldots, \alpha_n$
   $m$-th roots of unity
  and let $p_1, p_2, \ldots,  p_r$ be the (distinct) prime divisors of $m$.
  Then $\alpha_1+\cdots+\alpha_n=0$  only if $n=n_1p_1+\cdots +n_rp_r$ for suitable nonnegative integers $n_i$.
\end{lem}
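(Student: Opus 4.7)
The plan is to reformulate the hypothesis as a divisibility condition on an integer polynomial and then induct on the number of distinct prime divisors of $m$. Writing $\alpha_j=\zeta_m^{a_j}$ for suitable $a_j\in\{0,1,\ldots,m-1\}$, set
$$f(x)=\sum_{j=1}^n x^{a_j}\in\mathbb{Z}_{\geq 0}[x].$$
Then $f(1)=n$ and $f(\zeta_m)=0$, so the minimal polynomial of $\zeta_m$ over $\mathbb{Q}$, namely the cyclotomic polynomial $\Phi_m(x)$, divides $f(x)$ in $\mathbb{Q}[x]$, and hence (by Gauss's lemma) in $\mathbb{Z}[x]$. Thus the claim reduces to identifying the set $\mathcal{W}(m)$ of values $f(1)$ attainable by polynomials $f\in\mathbb{Z}_{\geq 0}[x]$ divisible by $\Phi_m$. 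The inclusion $\mathbb{N}p_1+\cdots+\mathbb{N}p_r\subseteq\mathcal{W}(m)$ is immediate from the identities $1+\zeta_{p_i}+\cdots+\zeta_{p_i}^{p_i-1}=0$ for each prime $p_i\mid m$; the content of the lemma is the reverse inclusion.

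I would proceed by induction on $r$. The base case $r=1$ (so $m=p^k$) is direct: since $\Phi_{p^k}(1)=p$, one has $n=f(1)=p\cdot(f/\Phi_m)(1)\in p\mathbb{N}$, as required. For $r\geq 2$, the strategy is to show that whenever $\alpha_1+\cdots+\alpha_n=0$ with $n\geq 1$, there is a sub-sum of exact length $p_i$ (for some $i$) that itself vanishes; subtracting such a sub-sum yields a vanishing sum of length $n-p_i$ and the inductive hypothesis applies (after possibly passing to a divisor of $m$). A natural candidate for the short vanishing sub-sum is a complete coset of the subgroup $\mathsf{U}_{p_i}\leq\mathsf{U}_m$ that happens to appear inside the multiset $\{\alpha_1,\ldots,\alpha_n\}$, since any such coset sums to zero.

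The main obstacle is establishing the existence of such a short vanishing sub-sum in the inductive step. This is the combinatorial heart of Lam--Leung's theorem; it exploits the positivity of the coefficients of $f$, combined with the fact that the vanishing ideal in $\mathbb{Z}[\mathsf{U}_m]$ is generated by the relations $1+\zeta_{p_i}+\cdots+\zeta_{p_i}^{p_i-1}=0$ for the primes $p_i\mid m$. Managing this positivity constraint throughout the reduction --- so that at each stage the remaining polynomial continues to lie in $\mathbb{Z}_{\geq 0}[x]$ --- is where the argument is most delicate, and this is the step for which I would rely on the explicit description of the vanishing relations (and where the intricate part of Lam and Leung's original proof is concentrated).
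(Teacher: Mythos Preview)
The paper does not prove this lemma; it is simply quoted from Lam and Leung \cite{lam2000} without proof, so there is nothing in the paper to compare your argument against.

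That said, your sketch has a genuine gap in the inductive step. The reformulation via $\Phi_m\mid f$ with $f\in\mathbb{Z}_{\ge 0}[x]$ and $f(1)=n$, and the base case $r=1$ using $\Phi_{p^k}(1)=p$, are both fine. The problem is the strategy you propose for $r\ge 2$: you want to show that every nonempty vanishing sum contains a vanishing sub-sum of \emph{prime} length, so that it can be peeled off. This is false once $m$ has at least three prime divisors. For instance, in $\mathsf{U}_{30}$ the six roots
\[
\zeta_{30}^{5},\ \zeta_{30}^{6},\ \zeta_{30}^{12},\ \zeta_{30}^{18},\ \zeta_{30}^{24},\ \zeta_{30}^{25}
\]
sum to zero (the last four are $\zeta_5,\zeta_5^2,\zeta_5^3,\zeta_5^4$ summing to $-1$, and $\zeta_{30}^{5}+\zeta_{30}^{25}=\zeta_6+\zeta_6^{-1}=1$), yet a short check of exponents shows that no proper sub-sum vanishes; in particular there is no vanishing sub-sum of length $2$, $3$ or $5$. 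So the multiset cannot be decomposed within $\mathbb{Z}_{\ge 0}[x]$ into prime-length vanishing pieces, and your induction cannot proceed as written.

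The point is that the conclusion $n\in\sum_i\mathbb{N}p_i$ is strictly weaker than the existence of a decomposition into prime-length vanishing sub-sums, and only the weaker statement is true. Lam and Leung's actual proof does not look for such sub-sums; after reducing to squarefree $m$, they argue globally in the group ring $\mathbb{Z}[\mathsf{U}_m]\cong\bigotimes_i\mathbb{Z}[\mathsf{U}_{p_i}]$, inducting on $r$ in a way that controls the augmentation of a nonnegative element of the relation ideal directly, without ever splitting the sum itself.
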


\begin{lem}[\mbox{\cite[Lemma 2.4]{ZYD2022}}]\label{nonvancent}
  Let $G$ be a group and  $z\in\z{G}$. Then  $x\in \V G$ if and only if  $zx\in \V G$.
 \end{lem}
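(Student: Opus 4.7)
The plan is to use the standard fact that a central element acts as a scalar in any irreducible representation. Fix $z \in \z{G}$ and any $\chi \in \irr{G}$, with affording representation $\rho : G \to \GL_{\chi(1)}(\mathbb{C})$. Since $\z{G}$ acts irreducibly and commutes with all of $\rho(G)$, Schur's lemma gives $\rho(z) = \omega(z) I$ for some scalar $\omega(z) \in \mathbb{C}^\times$; in fact $\omega(z)$ is a root of unity, but for our purposes only the fact that $\omega(z) \neq 0$ matters.

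Taking traces of $\rho(zx) = \rho(z)\rho(x) = \omega(z) \rho(x)$ yields
\[
\chi(zx) = \omega(z)\, \chi(x).
\]
Since $\omega(z) \neq 0$, we conclude $\chi(zx) = 0$ if and only if $\chi(x) = 0$. As this equivalence holds for every $\chi \in \irr{G}$, the element $x$ is a zero of some irreducible character if and only if $zx$ is, i.e.\ $x \in \V{G} \Leftrightarrow zx \in \V{G}$, which is the first assertion.

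For the "in particular" statement, apply the equivalence with $x = 1$: since $\chi(1) = \chi(1) > 0$ for every $\chi \in \irr{G}$, we have $1 \notin \V{G}$, and therefore $z = z \cdot 1 \notin \V{G}$ for every $z \in \z{G}$. This gives $\z{G} \subseteq \N{G}$. No obstacles are expected; the only subtlety is invoking Schur's lemma to guarantee that $\rho(z)$ is a nonzero scalar matrix, which is entirely standard.
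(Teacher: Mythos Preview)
Your argument is correct and is the standard proof: central elements act as nonzero scalars by Schur's lemma, so $\chi(zx)=\omega(z)\chi(x)$ with $\omega(z)\neq 0$, giving the equivalence and the inclusion $\z G\subseteq\N G$. The paper does not supply its own proof of this lemma but simply cites \cite{ZYD2022}; your write-up is exactly the kind of argument one would expect there.
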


\begin{lem}[\mbox{\cite[Corollary 1.3]{gruninger}}]\label{nonvanishingsmallgroup}
  Let $g \in H \leq G$ be such that $G =\cent{G}{g}H$. Then $g\in \V G$ if and only if $g\in \V H$.
\end{lem}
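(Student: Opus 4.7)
The plan is to prove this using the class algebra, translating nonvanishing into an invertibility condition for the class sum of $g$ and then transferring that condition between $G$ and $H$.

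First, I would observe that the hypothesis $G=\cent{G}{g}H$ forces $g^{G}=g^{H}$. Taking inverses in $G=\cent{G}{g}H$ gives $G=H\cent{G}{g}$, so any $x\in G$ can be written as $x=hc$ with $h\in H$ and $c\in\cent{G}{g}$; then $xgx^{-1}=h(cgc^{-1})h^{-1}=hgh^{-1}\in g^{H}$. Combined with the trivial inclusion $g^{H}\subseteq g^{G}$, this yields the desired equality.

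Set $K=\sum_{y\in g^{G}}y=\sum_{y\in g^{H}}y$, which lies in $Z(\mathbb{C}G)\cap Z(\mathbb{C}H)$. Using the Wedderburn decomposition $Z(\mathbb{C}G)\cong\prod_{\chi\in\Irr(G)}\mathbb{C}$, the element $K$ corresponds to the tuple $\bigl(|g^{G}|\chi(g)/\chi(1)\bigr)_{\chi}$; consequently $K$ is invertible in $Z(\mathbb{C}G)$ precisely when $\chi(g)\neq 0$ for every $\chi\in\Irr(G)$, that is, when $g\in\N{G}$. The analogous statement holds verbatim inside $H$: $g\in\N{H}$ iff $K$ is invertible in $Z(\mathbb{C}H)$. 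The problem is thereby reduced to showing that $K$ is invertible in $Z(\mathbb{C}G)$ if and only if it is invertible in $Z(\mathbb{C}H)$.

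For this final step I would use the elementary algebraic fact that, in a finite-dimensional commutative $\mathbb{C}$-algebra $A$, an element $z$ is invertible in $A$ if and only if $z^{-1}\in\mathbb{C}[z]$: indeed, if $z$ is invertible in $A$, then multiplication by $z$ on the finite-dimensional subalgebra $\mathbb{C}[z]$ is injective, hence surjective, so $1\in\mathbb{C}[z]\cdot z$. Applying this to $K$, and noting that $\mathbb{C}[K]\subseteq Z(\mathbb{C}G)\cap Z(\mathbb{C}H)$, invertibility of $K$ in either centre is equivalent to invertibility in the intrinsic subalgebra $\mathbb{C}[K]$, and the two conditions therefore coincide. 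I foresee no substantive obstacle in this plan: the class-fusion identity $g^{G}=g^{H}$ drops out immediately from the factorisation hypothesis, and the passage of invertibility between the two centres via the intermediary $\mathbb{C}[K]$ (equivalently, via the statement that the constant term of the minimal polynomial of $K$ is intrinsic to $K$) is a routine commutative-algebra argument.
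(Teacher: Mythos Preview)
Your proof is correct. The paper does not supply its own argument here but simply cites Gr\"uninger; your class-sum approach---characterising $g\in\N G$ via invertibility of the conjugacy-class sum $K$ in $Z(\mathbb{C}G)$, and then noting that invertibility of $K$ is intrinsic to the finite-dimensional subalgebra $\mathbb{C}[K]$ and hence transfers between $Z(\mathbb{C}G)$ and $Z(\mathbb{C}H)$---is essentially the argument Gr\"uninger gives.
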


\begin{lem}[\mbox{\cite[Lemma 2.5]{ZYD2022}}]\label{dpnv}
  Let  $A$ and $B$  be normal subgroups of the group $G$ such that $A\cap B=1$.
Then, for $a \in A$, we have $a \in \V{G}$ if and only if $aB \in \V{G/B}$.
\end{lem}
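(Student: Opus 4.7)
The plan is to treat the two directions separately. The ``if'' direction is immediate from the basic observation recalled at the start of Section~2: if $aB\in\V{G/B}$, then the whole coset $aB$ lies in $\V G$, so in particular $a\in\V G$.

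For the ``only if'' direction, I will invoke Clifford theory with respect to $A\trianglelefteq G$. The key preliminary observation is that since $A$ and $B$ are both normal in $G$ with $A\cap B=1$, we have $[A,B]=1$; hence $B$ centralizes $A$ and acts trivially on $\Irr(A)$ by conjugation. Via the natural isomorphism $A\cong AB/B$, this means the $G$-orbit and the $(G/B)$-orbit on $\Irr(A)\cong\Irr(AB/B)$ coincide.

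Given $\chi\in\Irr(G)$ with $\chi(a)=0$, I will apply Clifford's theorem to write $\chi_A=e(\alpha_1+\cdots+\alpha_t)$, with $\{\alpha_1,\ldots,\alpha_t\}$ a single $G$-orbit in $\Irr(A)$; evaluating at $a$ yields $\sum_i\alpha_i(a)=0$. Then I will pick any $\psi\in\Irr(G/B)$ lying above $\alpha_1$ viewed in $\Irr(AB/B)$, which exists by Clifford theory applied to $AB/B\trianglelefteq G/B$. A second application of Clifford's theorem gives $\psi_{AB/B}=e'\sum_j\beta_j$ over the $(G/B)$-orbit of $\alpha_1$, and by the preliminary observation this orbit is precisely $\{\alpha_1,\ldots,\alpha_t\}$ (identified with its image in $\Irr(AB/B)$). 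Therefore $\psi(aB)=e'\sum_i\alpha_i(a)=0$, giving $aB\in\V{G/B}$.

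The main subtlety is the identification of the $G$-orbit with the $(G/B)$-orbit on $\Irr(A)$. Without the commutativity $[A,B]=1$, which is guaranteed here by $A,B\trianglelefteq G$ together with $A\cap B=1$, the $(G/B)$-orbit could be strictly smaller than the $G$-orbit, and the sum $\sum_j\beta_j(a)$ computed over that smaller orbit could fail to vanish even though $\sum_i\alpha_i(a)$ does.
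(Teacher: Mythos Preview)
Your proof is correct. The paper itself does not supply an argument but refers to \cite[Lemma~2.5]{ZYD2022}; the Clifford-theoretic route you take is the natural one and is essentially equivalent to applying Lemma~\ref{ind} once in $G$ and once in $G/B$, using that the $G$-equivariant isomorphism $A\cong AB/B$ identifies the $G$-orbit of $\alpha_1$ on $\irr A$ with the $(G/B)$-orbit on $\irr{AB/B}$.
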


\begin{lem}[\mbox{\cite[Theorem A]{isaacs1999}}]\label{vP}
  If $P$ is a $p$-group, $p$ a prime number, then $\N P = \z P$ and hence
  $\P P = \frac{m-1}{m}$, where $m = [P: \z P]$. 
\end{lem}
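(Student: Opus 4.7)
The plan is to prove the stronger equality $\N P=\z P$, from which the formula $\P P=\frac{m-1}{m}$ follows at once by $|\V P|=|P|-|\N P|=|P|-|\z P|$ and division by $|P|$. The inclusion $\z P\subseteq\N P$ is already contained in Lemma~\ref{nonvancent}; concretely, for $z\in\z P$ and any $\chi\in\Irr P$ the element $z$ acts as a scalar $\lambda(z)$ on any representation affording $\chi$, so $|\chi(z)|=\chi(1)\neq 0$.

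For the reverse inclusion I would argue by induction on $|P|$ and show that every $g\in P-\z P$ lies in $\V P$. Pick a central subgroup $Z\leq\z P$ with $|Z|=p$ and pass to $\overline P=P/Z$, $\bar g=gZ$. If $\bar g\notin\z{\overline P}$, induction applied to $\overline P$ furnishes $\bar\chi\in\Irr\overline P$ with $\bar\chi(\bar g)=0$; inflating to $P$ gives $\chi\in\Irr P$ with $\chi(g)=0$, so $g\in\V P$. Otherwise $\bar g$ is central in $\overline P$, equivalently $[g,P]\subseteq Z$. Since $g\notin\z P$, the subgroup $[g,P]$ is a \emph{nontrivial} subgroup of the group $Z$ of order $p$, hence equals $Z$; consequently there exists $x\in P$ with $z:=[g,x]\neq 1$ and $z\in\z P$.

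The finishing move is the central-character trick. Because $\bigcap_{\chi\in\Irr P}\ker\chi=1$, one can choose $\chi\in\Irr P$ with $z\notin\ker\chi$; its central character $\lambda$ then satisfies $\lambda(z)\neq 1$, and scalar action of $\z P$ yields $\chi(zh)=\lambda(z)\chi(h)$ for every $h\in P$. Combining the identity $x^{-1}gx=g\cdot[g,x]=gz$ with class-function invariance,
\[
\chi(g)\;=\;\chi(x^{-1}gx)\;=\;\chi(gz)\;=\;\lambda(z)\,\chi(g),
\]
so $(1-\lambda(z))\chi(g)=0$ forces $\chi(g)=0$ and $g\in\V P$, completing the induction. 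The argument is essentially mechanical; the only step requiring a bit of thought is arranging the split into cases via an order-$p$ central subgroup $Z$, so that in the residual case the commutator $[g,x]$ is forced into $\z P$ and the central-character identity collapses $\chi(g)$ to zero.
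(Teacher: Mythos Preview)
Your proof is correct. The paper does not give its own argument for this lemma; it merely cites \cite[Theorem~A]{isaacs1999}, so there is no paper proof to compare against. Your inductive argument via a central subgroup of order $p$, splitting into the inflation case and the central-character case, is the standard self-contained proof of this well-known fact about $p$-groups and is exactly what one would expect here.
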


\begin{lem}[\cite{brough2016}]\label{brough}
 Let $G$ be a finite group and let $P$ be a Sylow $p$-subgroup of $G$, $p$ a prime number.
Then $\z P \cap \oh pG \subseteq \N G$.
\end{lem}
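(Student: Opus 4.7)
The assertion is that $\chi(z)\neq 0$ for every $\chi\in\Irr G$, for $z\in\z P\cap\oh p G$. The plan is to use $p$-modular block theory. Fix $\chi\in\Irr G$ and let $B$ be the $p$-block of $G$ containing $\chi$, with defect group $D$. Since $D$ is a $p$-subgroup, after conjugation we may assume $D\le P$. Using that $\oh p G$ is contained in every defect group (a standard fact), we obtain $z\in D$; and since $z\in\z P$ also centralizes $D$, in fact $z\in\z D$. Thus $z$ is a central element of a defect group of $B$.

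Next, apply Brauer's second main theorem to the $p$-element $z$ to expand
\[
\chi(z) \;=\; \sum_{\varphi\in\IBr(\cent G z)} d^{z}_{\chi,\varphi}\,\varphi(1),
\]
where $\varphi$ ranges over the irreducible Brauer characters of $\cent G z$ lying in blocks $b$ of $\cent G z$ with $b^{G}=B$, and the generalized decomposition numbers $d^{z}_{\chi,\varphi}$ lie in $\mathbb Z[\zeta_{|z|}]$. Note that $z$ is central in $\cent G z$ and contained in $\oh p{\cent G z}$, so each relevant block $b$ of $\cent G z$ has a defect group containing $z$ as a central element; this is precisely the local configuration that makes the Brauer homomorphism at $z$ well-behaved and affords strong control on the $d^{z}_{\chi,\varphi}$.

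The crux of the argument, and the main obstacle, is then the non-vanishing of the resulting sum. For characters with $p\nmid\chi(1)$ this is immediate: at the $p$-element $z$ the eigenvalues of the representation afforded by $\chi$ are $p$-power roots of unity, each congruent to $1$ modulo any prime $\mathfrak p$ of $\overline{\mathbb Z}$ above $p$, whence $\chi(z)\equiv\chi(1)\not\equiv 0\pmod{\mathfrak p}$. For $\chi$ with $p\mid\chi(1)$ the simple congruence is insufficient, and one has to argue more delicately, inducting on $|G|$ and using Brauer's first main theorem to transfer the problem from $B$ to its Brauer correspondent block in $\norm G D$, which is a proper subgroup whenever $D<G$. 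There the inductive hypothesis, together with the rigidity of block idempotents under the Brauer correspondence and the fact that $z$ remains a central element of $D$ in the smaller group, lets one rule out any cancellation in the generalized decomposition sum and conclude $\chi(z)\neq 0$.
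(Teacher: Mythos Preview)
The paper does not prove this lemma; it is quoted from \cite{brough2016} as a known result, so there is no in-paper argument to compare against, only the original reference.

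Your outline is correct and standard up through the case $p \nmid \chi(1)$: the congruence $\chi(z) \equiv \chi(1) \pmod{\mathfrak p}$ for a $p$-element $z$ immediately gives $\chi(z) \neq 0$ when $\chi$ has $p'$-degree. Notice, though, that this step uses nothing about $z$ beyond its being a $p$-element; the hypothesis $z \in \z P \cap \oh pG$ has not yet been used at all. The entire content of the lemma therefore sits in the case $p \mid \chi(1)$, and that is exactly where your argument has a genuine gap.

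Brauer's first main theorem gives a bijection between blocks of $G$ with defect group $D$ and blocks of $\norm GD$ with defect group $D$; it does not relate the values $\chi(z)$ for $\chi \in \Irr(B)$ to character values in the Brauer correspondent block of $\norm GD$. There is no mechanism by which an inductive hypothesis of the form ``no irreducible character of $\norm GD$ vanishes at $z$'' controls the generalized decomposition sum $\sum_\varphi d^z_{\chi,\varphi}\,\varphi(1)$: the numbers $d^z_{\chi,\varphi}$ are algebraic integers which may be zero, negative, or non-real, and nothing you have written rules out cancellation. The phrase ``rigidity of block idempotents under the Brauer correspondence'' does not name a usable statement here, and the proposed induction does not go through as stated.

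Brough's actual proof is Clifford-theoretic rather than block-theoretic. Writing $N=\oh pG$ and letting $\theta$ be a constituent of $\chi_N$, one has $\chi(z)=e\,\theta(1)\sum_i \lambda_i(z)$ where the $\lambda_i$ are the central characters of the $G$-conjugates $\theta_i$ of $\theta$ (note $z\in\z N$ since $N\le P$). The hypothesis $z\in\z P$ is then used to show that $\lambda_i(z)$ is constant on $P$-orbits in $\{\theta_i\}$, and an analysis of the orbit sizes combined with the arithmetic of sums of $p$-power roots of unity forces the sum to be nonzero. If you wish to supply a proof rather than a citation, that is the line to pursue.
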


For short, in the rest of the paper we write $\mathfrak{a} = \P{A_7}$.  
Although  $\N G$ is  in general quite far from being a subgroup of the group $G$, it is indeed an abelian normal subgroup under the assumption that $\P G< \mathfrak{a}$, as shown in the following result (which is the key theorem of~\cite{ZYD2022}). 
\begin{thm}[\mbox{\cite[Theorem 4.3]{ZYD2022}}]\label{zyd}
  If $G$ is a finite group such that  $\P G < \mathfrak{a}$, then $\N G$ is an abelian normal subgroup of $G$.
  \end{thm}

\section{Auxiliary results}

Let $A$ be an abelian group and let $\hat{A}=\irr{A}$ be the dual group of $A$.
For  $B \leq A$, let  $B^\perp = \{\alpha\in\hat{A}\mid \alpha(b)=1,~\forall~b\in B\} \leq \hat{A}$, and 
for  $V \leq \hat{A}$, let $V^\perp = \{a\in A\mid \lambda(a)=1,~\forall~\lambda \in V\} \leq A$.
Then $B^\perp \cong \widehat{A/B}$
and $B^{\perp\perp} = B$ (see for instance \cite[V.6.4]{huppertI}). 
Finally, we  recall that for every  $x \in \Aut(A)$,  $x$ acts  on $\hat{A}$ via 
$\alpha^x(a) = \alpha(a^{x^{-1}})$, where $\alpha \in \hat{A}$ and $a \in A$.

\begin{lem}[\mbox{\cite[Lemma 3.1]{ZYD2022}}]\label{pgroup}
 Let $A$ be an abelian group, $y \in \Aut(A)$ such that $o(y) = 2$,  and  let $Q = A \rtimes \la y \ra$.
  Then  
\begin{description}
  \item[(1)] The map $\gamma:A\rightarrow A$, defined by  $\gamma(a)=[a,y]$
  for  $a\in A$, is a group homomorphism with $\mathrm{Im}(\gamma)=[A,y] = Q'$ and $\ker(\gamma)=  \cent Ay =  \z{Q}$.
  Moreover, $A/\cent Ay$ and $[A,y]$ are isomorphic $\la y \ra$-modules, on which $y$ acts as the inversion. 
  \item[(2)] $[A,y]^{\perp}= \cent{\hat{A}}y$ and $[\hat{A},y]^{\perp}=\cent Ay$.
  \item[(3)] The groups $[\hat{A},y]$ and $ [A,y]$ are isomorphic. 
  \end{description}
\end{lem}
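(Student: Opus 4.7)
The plan is to exploit that $A$ is abelian and $o(y)=2$, which makes commutators with $y$ essentially a $\mathbb{Z}[\langle y\rangle]$-module map with $y$ acting as $-1$ on the image.

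For part (1), I would start from the identity $[a,y]=a^{-1}a^y$, valid since $A$ is abelian, and verify directly that $\gamma(ab)=b^{-1}a^{-1}a^yb^y=\gamma(a)\gamma(b)$, using commutativity of $A$. The image of $\gamma$ is by definition $[A,y]$, and since $A$ is abelian and normal in $Q$ we have $Q'=[A,\langle y\rangle]=[A,y]$. The kernel computation $\ker\gamma=\{a\in A\mid a^y=a\}=\cent{A}{y}$ is immediate, and $\cent{A}{y}=\z{Q}$ because $A$ is abelian so the center of $Q$ consists of those elements of $A$ fixed by $y$. The first isomorphism theorem then yields $A/\cent{A}{y}\cong [A,y]$. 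To see this is an isomorphism of $\langle y\rangle$-modules with $y$ acting as inversion on both sides, I would compute $\gamma(a)^y=(a^{-1}a^y)^y=(a^y)^{-1}a=\gamma(a)^{-1}$ using $y^2=1$, and similarly $(aa^y)^y=aa^y$ shows $aa^y\in\cent{A}{y}$, so $y$ inverts $A/\cent{A}{y}$. The map $\gamma$ intertwines the two actions since $\gamma(a^y)=(a^y)^{-1}a=\gamma(a)^{-1}=\gamma(a)^y$.

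For part (2), the key computation is
\[
\alpha([a,y])=\alpha(a)^{-1}\alpha(a^y)=\alpha(a)^{-1}\alpha^{y}(a)=[\alpha,y](a),
\]
where I use $y^{-1}=y$ so that the given action $\alpha^{y}(a)=\alpha(a^{y^{-1}})=\alpha(a^y)$. From this identity, $\alpha\in [A,y]^{\perp}$ iff $[\alpha,y]$ is the trivial character on $A$ iff $\alpha^y=\alpha$ iff $\alpha\in \cent{\hat{A}}{y}$. The second equality $[\hat{A},y]^{\perp}=\cent{A}{y}$ is then obtained by applying the first formula to the abelian group $\hat{A}$ (with $y$ acting as described) and using the canonical identification $\hat{\hat{A}}=A$.

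For part (3), I would chain together (1), (2) and the standard duality facts recalled just before the lemma. Applying (1) to $\hat{A}$ in place of $A$ gives $[\hat{A},y]\cong \hat{A}/\cent{\hat{A}}{y}$; by (2), $\cent{\hat{A}}{y}=[A,y]^{\perp}$; and by the recalled identity $\hat{B}\cong \hat{A}/B^{\perp}$ applied with $B=[A,y]$, this quotient is isomorphic to $\widehat{[A,y]}$, which in turn is isomorphic to $[A,y]$ since every finite abelian group is isomorphic to its dual. Concatenating these isomorphisms yields $[\hat{A},y]\cong [A,y]$. I do not anticipate a serious obstacle; the only point requiring care is keeping track of the convention $\alpha^x(a)=\alpha(a^{x^{-1}})$ when computing the action on $\hat{A}$, and using $y^{-1}=y$ to simplify.
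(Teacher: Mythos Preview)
Your argument is correct and complete. Note, however, that the paper does not supply its own proof of this lemma: it is quoted from~\cite{ZYD2022} and stated without proof, so there is nothing to compare your approach against. Your derivation is the standard one and would serve perfectly well as a self-contained proof; the only minor point worth spelling out is why $\z{Q}\subseteq A$ (equivalently, why no element of the coset $Ay$ is central), which follows immediately from the hypothesis that $y$ acts nontrivially on $A$.
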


In the following, we denote by $\class(Q)$ the nilpotency class of a nilpotent group $Q$ and by ${\bf Z}_2(Q)$ the second term of the upper central series of $Q$. 
If $A$ is an abelian $p$-group, $p$ a prime, we denote by $\exp(A)$ the exponent of $A$ (the largest order of an element of $A$, in this case) and we define, for $i \geq 0$, 
$\Omega_i(A) = \{ a \in A \mid o(a) \text{ \ divides \ } p^i \}$, which is a characteristic subgroup of $A$.  
We now state a couple of lemmas for later use.

\begin{lem}[\mbox{\cite[Lemma 2.6]{ZYD2022}}]\label{ind}
  Let $A$ be a normal subgroup of the group $G$.
Then, for $a \in A$, $a \in \V G$ if and only if there exists a character $\alpha \in \irr A$ such that  $\alpha^G(a) = 0$.   
\end{lem}

\begin{lem}[\mbox{\cite[Lemma 3.5]{ZYD2022}}]\label{5/6}
  Let $A$ be an abelian normal $2$-subgroup of the group $G$ such that $[G:A] =6$, and let $Q\in\syl{2}{G}$ and $P\in\syl{3}{G}$.
  Then 
  \begin{description}
  \item[(1)]  There exists an element $y\in \norm{Q}{P}$ such that $y \not\in A$ and $y^2\in \cent{A}{P}$.
  \item[(2)] If $\cent AP = 1$ and either   $\exp(Q') \leq 2$   or $\class(Q) \leq 2$, then $A \subseteq \N G$. 
  \end{description}
Assume further that $P$ acts nontrivially on $A$ and that $Q$ is nonabelian. Then
  \begin{description}
  \item[(3)]  $G- A\subseteq \V G$. 
  \item[(4)]  If $\P G \leq \mathfrak{a}$, then $\cent AP \leq \z G$. 
  \end{description}
\end{lem}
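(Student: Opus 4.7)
The plan is to handle the four parts in sequence. For part (1), I would apply Frattini's argument: since $|G/A|=6$, the Sylow $3$-subgroup $PA/A$ of $G/A$ is normal (in both $C_6$ and $S_3$), so $PA\lhd G$ and $G=A\cdot\norm{G}{P}$ with $|\norm{G}{P}|=6|\cent{A}{P}|$. A Sylow $2$-subgroup of $\norm{G}{P}$ has order $2|\cent{A}{P}|$ and meets $A$ in $\cent{A}{P}$; after conjugating it inside $Q$, any element $y$ outside $A$ satisfies $y\in\norm{Q}{P}$ and $y^{2}\in\cent{A}{P}$.

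For part (2), I would fix $a\in A$ and $\chi\in\irr G$, and let $\alpha\in\irr A$ lie under $\chi$ with $G$-orbit $\mathcal{O}$. Since $\cent{A}{P}=1$, duality forces $\cent{\hat A}{P}=1$, so only orbit sizes $1,3,6$ occur. The trivial orbit yields $\chi(a)=\chi(1)\ne 0$. For $|\mathcal{O}|=3$, $\chi(a)$ is a sum of three $2$-power roots of unity whose product equals $\alpha$ applied to the $P$-norm of $a$; that norm lands in $\cent{A}{P}=1$, so the product is $1$, and Lemma~\ref{vs} forbids cancellation. For $|\mathcal{O}|=6$, I would split the six terms into two $P$-triples associated to cosets $A$ and $yA$; the key observation is that each $\delta_{i}=\alpha([a^{x^{i}},y])$ lies in $\mathsf{U}_{2}$, since $[a^{x^{i}},y]\in Q'$ has order at most $2$. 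This is immediate if $\exp(Q')\le 2$, and follows from $\class(Q)\le 2$ via Lemma~\ref{pgroup}(1): $y$ acts as inversion on $Q'=[A,y]$, while $\class(Q)\le 2$ makes $y$ centralize $Q'$, forcing $\exp(Q')\le 2$ anyway. Lemma~\ref{6sumsof2thunity}(1) then closes the case.

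For part (3), I would split $G-A$ by cosets of $A$. In the $C_{6}$ case, first show that some $\alpha\in\irr A$ has stabilizer exactly $A$: otherwise $\hat A=\cent{\hat A}{P}\cup\cent{\hat A}{y}$, but a group is never the union of two proper subgroups unless one contains the other, and either containment contradicts the nontriviality of $P$ or of $y$ on $A$. Then $\chi=\alpha^{G}$ vanishes on all of $G-A$, because no element outside $A$ is $G$-conjugate into $A$. In the $S_{3}$ case I would use two characters: for $g$ with $gA$ of order $3$, pick any $\alpha$ not $P$-fixed; the stabilizer $T_{\alpha}/A$ is a $2$-group, so no $G$-conjugate of $g$ lies in $T_{\alpha}$ and the induced character vanishes at $g$. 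For $g$ with $gA$ an involution, inflate the $2$-dimensional irreducible of $G/A\cong S_{3}$; it vanishes on all involution cosets.

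For part (4), I would argue by contrapositive. Assuming $\cent{A}{P}\not\le\z{G}$ and using $\z{G}\cap A=\cent{A}{P}\cap\cent{A}{y}$, one obtains $[\cent{A}{P}:\z{G}\cap A]\ge 2$. For each $a_{P}\in\cent{A}{P}\setminus(\z{G}\cap A)$, the element $[a_{P},y]$ is a nontrivial element of the abelian $2$-group $\cent{A}{P}$, so some $\alpha\in\cent{\hat A}{P}$ sends it to $-1$; such $\alpha$ is automatically not $y$-fixed, so its stabilizer is $AP$, and the induced degree-$2$ character $\chi=\tilde\alpha^{G}$ satisfies $\chi(a)=\alpha(a)(1+\alpha([a,y]))$ on $A$, vanishing at every $a$ whose projection to $\cent{A}{P}$ under the coprime decomposition $A=\cent{A}{P}\times[A,P]$ equals $a_{P}$. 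Letting $a_{P}$ range gives $|\V{G}\cap A|\ge(|\cent{A}{P}|-|\z{G}\cap A|)\cdot|[A,P]|\ge|A|/2$, and combined with part (3), $|\V{G}|\ge 11|A|/2$, so $\P{G}\ge 11/12>\mathfrak{a}$, a contradiction. I expect the bookkeeping in (4)---ensuring the vanishing sets across different choices of $a_{P}$ assemble into a full half of $|A|$---to be the most delicate step; the rest follows from Clifford theory and the root-of-unity lemmas already set up.
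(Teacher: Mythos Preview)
The paper does not give its own proof of this lemma; it is quoted verbatim from~\cite{ZYD2022}, so there is no in-paper argument to compare against. Your proposal is essentially correct and matches what one would expect the cited proof to contain: Frattini for part~(1), Clifford theory combined with Lemmas~\ref{vs} and~\ref{6sumsof2thunity}(1) for part~(2), induced characters (plus the $2$-dimensional $S_3$-character) for part~(3), and a counting argument producing $\P G\ge \tfrac{11}{12}$ for part~(4).

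Two small points deserve tightening. In part~(1), ``conjugating into $Q$'' is imprecise, since conjugating the Sylow $2$-subgroup of $\norm GP$ also moves $P$; the clean way is to compute directly that $|Q\cap\norm GP|=|Q|\,|\norm GP|/|G|=2|\cent AP|$, so $Q\cap\norm GP$ already is a Sylow $2$-subgroup of $\norm GP$ and any $y\in(Q\cap\norm GP)\setminus\cent AP$ works. In part~(4), to justify that $\alpha([a,y])=\alpha([a_P,y])$ for every $a$ with $\cent AP$-component $a_P$, you should note explicitly that $y\in\norm GP$ forces $[A,P]^y=[A,P]$, so $[a',y]\in[A,P]$ for $a'\in[A,P]$, and then $\alpha\in\cent{\hat A}P=[A,P]^\perp$ kills it. With these clarifications the argument is complete; the ``bookkeeping'' you flag is actually trivial, since distinct values of $a_P$ index disjoint $[A,P]$-cosets.
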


In the rest of the section, we will address the most complicated cases arising in the process of proving Theorem~A.
In order to avoid repetitions, we introduce the following setting.

\begin{hy}\label{hyp}
  Let $A$ be an abelian normal $2$-subgroup of the group $G$ such that $[G:A] = 6$ and assume that  $A$ has a complement $H$ in $G$.
  Let $P$ be the Sylow $3$-subgroup of $H$, $y$ an involution of $H$ and  $Q=A\la y\ra$ a Sylow $2$-subgroup of $G$.
  Assume that  $Q$ is nonabelian and that   $\cent AP =1$.
\end{hy}

We remark that,  by part (3) of Lemma \ref{5/6},   if  $G$ satisfyies Setting \ref{hyp}, then 
$\P G=\frac{5}{6}$ if and only if  $A \subseteq \N G$.  

Given an abelian $p$-group $A$, we denote by $\rank(A)$ its \emph{rank} (i.e. the number of factors in a decomposition of $A$ as a direct product of cyclic groups) and, for $a \in A$, $k \in \mathbb{Z}$ and
$x \in \Aut(A)$, we will write, for short,  $a^{kx}$ instead of $(a^k)^x$; in particular, we will write $a^{-x}$ for $(a^x)^{-1}$.

\begin{rem} \label{3.3a}
\normalfont If a cyclic group $P = \la x \ra$ of order $3$ acts on an abelian $2$-group $A$ and $\cent AP =1$, then     
   $aa^xa^{x^2} = 1$  for every $a \in A$,  because $aa^xa^{x^2} \in \cent AP$.
Considering $A$ as a $P$-module, 
if $B\neq 1$ is a $P$-submodule of $A$ and
$\rank(B) \leq 2$, then   $B$ is an indecomposable $P$-module and hence $B$ is homocyclic of rank $2$.
Thus $A$, being a direct sum of indecomposable $P$-modules, has even rank.
Moreover, 
$B$ is a uniserial $P$-module whose only submodules are the subgroups $\Omega_i(B)$, $i \geq 0$,  and whose  composition factors are all isomorphic to $C_2 \times C_2$ (see for instance \cite{harris}).
As a consequence, for every $ a \in A$, $a \neq 1$,  the $P$-submodule of $A$ generated by $a$ is   $\la a, a^x\ra = \la a \ra \times \la a^x \ra$.
\end{rem}

If $A$ is an $H$-module, we say that $B$ is a \emph{cyclic} submodule of $A$ if there exists a single element $b \in A$ such that
$B = \la b \ra^H$, i.e. $B$ is generated  by $b$ as an $H$-module. Observe that this is equivalent to $B = \la b \ra^G$,
the normal closure of $\la b \ra$ in the semidirect product $G = A \rtimes H$.

\begin{rem}\label{3.3b}
 \normalfont  In the situation of Setting~\ref{hyp}, 
a nontrivial cyclic $H$-submodule of $A$ has rank either $2$ or $4$.
  In fact, writing $H = \la x, y \ra$, where $P = \la x \ra$, the condition $\cent AP = 1$ implies that $bb^xb^{x^2} = 1$ for every
  $b \in A$. So, for $1 \neq b \in A$, $B = \la b\ra^H = \la b, b^x, b^y, b^{yx} \ra$ has rank at most $4$ and hence
  $\rank(B) \in \{ 2,4 \}$ by the previous remark. We also note that $\rank(B) = 4$ if and only if $b^y \not\in \la b, b^x\ra$,
  and that $\rank(B) = 2$ if and only if $B = \la b \ra \times \la b^x\ra$.  
\end{rem}

We denote, for short,  by $(C_n)^k$ the direct product of $k \geq 0$ copies of the cyclic group $C_n$. 

\begin{lem}[\mbox{\cite[Lemma 3.7]{ZYD2022}}]\label{XYtypeelementary}
  Assume Setting~\ref{hyp}  and that $A \subseteq \N G$. Let $Z = \z Q$ and $P = \la x \ra$.
  Then
  \begin{description}
  \item[(1)]  If $H \cong C_6$, then $Z \nor G$ and $\exp(A/Z) = 2^{c-1}$, where  $c = \class(Q) \leq 3$.  
  \item[(2)]  If $H \cong C_6$ and $B$ is  a $P$-submodule of $A$ such that  $\rank(B) = 2$ and  $A_0 = B \times B^y$
    has index at most $4$ in $A$, then $\exp(B)=2$.
  \item[(3)] If  $H\cong S_3$,   $A=Z\times Z^x$ and $\rank(A) \leq 4$, then $\exp(A) \leq 4$ and  $Z$
  is not isomorphic to $(C_4)^2$.  
  \end{description}   
\end{lem}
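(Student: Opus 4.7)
The plan is to exploit the induced-character formula $\alpha^G(a)=\sum_{h\in H}\alpha(a^h)$ for $a\in A$ and $\alpha\in\Irr A$, together with Lemma~\ref{ind}, which turns the hypothesis $A\subseteq\N G$ into the requirement that this sum is nonzero for every such pair. Setting $\varepsilon_i=\alpha(a^{x^{i-1}})$ and $\eta_i=\alpha(a^{x^{i-1}y})$, the remark in Setting~\ref{hyp} yields $\varepsilon_1\varepsilon_2\varepsilon_3=1=\eta_1\eta_2\eta_3$, placing us in the framework of Lemma~\ref{6sumsof2thunity}. Each of the three conclusions will be proved by contraposition: if the stated restriction fails, I will construct $(a,\alpha)$ realizing one of the exceptional configurations of Lemma~\ref{6sumsof2thunity} for which $\Sigma=0$ is permitted, and verify $\Sigma=0$ directly.

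For part~(1), $x$ and $y$ commute in $H\cong C_6$, so $x$ normalizes $Z=\cent{A}{y}$, whence $Z\nor G$. Since $y$ inverts $Q'=[A,y]$ by Lemma~\ref{pgroup}(1), $[Q',y]=(Q')^2$, and inductively $Q^{(k+1)}=(Q')^{2^{k-1}}$ for $k\ge 2$; thus $\exp(A/Z)=\exp Q'=2^{c-1}$. To show $c\le 3$, suppose $\exp[A,y]\ge 8$, pick $b=[a,y]$ of order $2^n\ge 8$ (so $o(a)\ge 2^n$), and note that both $\la a,a^x\ra$ and $\la b,b^x\ra$ are isomorphic to $(C_{2^n})^2$ by the remark. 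I choose $\alpha\in\Irr A$ with $\alpha(a)=\zeta_8,\ \alpha(a^x)=-\zeta_8,\ \alpha(b)=\alpha(b^x)=\zeta_4$: the compatibility of these values comes from the $P$-orbit relations $aa^xa^{x^2}=bb^xb^{x^2}=1$ and the defining identity $b=a^{-1}a^y$, and existence of the extension to $\Irr A$ follows from Pontryagin duality. A direct calculation then gives $\alpha^G(a)=0$, matching the exceptional case of Lemma~\ref{6sumsof2thunity}(3) with $\Delta=\{\zeta_4,-1\}$, a contradiction.

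For part~(2), $A_0=B\times B^y$ has index $\le 4$ in $A$, so every character of $A_0$ extends to $A$. Suppose $\exp B\ge 4$ and pick $b_0\in B$ of order $2^n\ge 4$; by the remark, $B=\la b_0\ra\times\la b_0^x\ra\cong (C_{2^n})^2$. Choose $\beta\in\Irr B$ with $\beta(b_0)=\zeta_4,\ \beta(b_0^x)=-\zeta_4$ and $\gamma\in\Irr B^y$ with $\gamma(b_0^y)=\gamma(b_0^{xy})=-1$, and let $\alpha\in\Irr A$ extend $\beta\otimes\gamma$. Then $(\varepsilon_1,\varepsilon_2,\varepsilon_3,\eta_1,\eta_2,\eta_3)=(\zeta_4,-\zeta_4,1,-1,-1,1)$, and Lemma~\ref{6sumsof2thunity}(2) gives $\alpha^G(b_0)=0$, a contradiction. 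Hence $\exp B=2$.

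For part~(3), I parametrize $\alpha=\mu\otimes\nu\in\Irr(Z\times Z^x)$ via $\phi=\mu$, $\psi(z)=\nu(z^x)\in\Irr Z$, and using $yxy=x^{-1}$ together with $zz^xz^{x^2}=1$ I expand $\alpha^G(a)$ for $a=z_1z_2^x$ as an explicit expression in $u=\phi(z_1), v=\phi(z_2), s=\psi(z_1), t=\psi(z_2)$. If $\exp Z\ge 8$, pick $g\in Z$ of order $2^n\ge 8$, take $a=g\cdot g^{2x}$ and set $\phi(g)=\zeta_8,\psi(g)=\zeta_8^3$; the resulting six terms are $\zeta_8^i$ for $i\in\{1,2,3,5,6,7\}$, summing to $0$. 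If $Z\cong(C_4)^2=\la g_1\ra\times\la g_2\ra$, take $a=g_1 g_2^x$ with $\phi(g_1)=1,\phi(g_2)=-\zeta_4,\psi(g_1)=\zeta_4,\psi(g_2)=-1$; a direct check again yields $\alpha^G(a)=0$. Both cases contradict $A\subseteq\N G$, so $\exp A\le 4$ and $Z\not\cong (C_4)^2$. The main obstacle throughout is the existence of the prescribed characters in parts~(1) and (3); this reduces to verifying compatibility of the specified values with the relations holding in the subgroup of $A$ generated by the relevant $H$-conjugates of $a$, and is secured by the homocyclicity of the rank-$2$ $P$-submodules from the remark, together (in part~(1)) with the observation that $\la a\ra$ cannot be $y$-invariant when $o(a)=o([a,y])\ge 8$, obtained by eliminating the possible order-$2$ automorphisms of a cyclic $2$-group.
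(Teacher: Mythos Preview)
Your arguments for parts~(2) and~(3) are correct: in both cases the character $\alpha$ is built on a direct product $Z\times Z^x$ or $B\times B^y$ where the factors are genuinely independent, so prescribing values on generators of each factor imposes no hidden constraints, and your six-term sums do vanish as claimed.

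The argument for part~(1) has a genuine gap in the step ``$c\le 3$''. You choose $a$ with $b=[a,y]$ of order $2^n\ge 8$ and then assert that a character $\alpha$ with $\alpha(a)=\zeta_8$, $\alpha(a^x)=-\zeta_8$, $\alpha(b)=\alpha(b^x)=\zeta_4$ exists. But the only relations you check are $aa^xa^{x^2}=1$ and $bb^xb^{x^2}=1$; you have not ruled out further relations inside $\la a,a^x,b,b^x\ra$. If $\la a\ra^G$ has rank $2$, then $b\in\la a,a^x\ra$, so $\alpha(b)$ is \emph{forced} by $\alpha(a),\alpha(a^x)$ and need not equal $\zeta_4$. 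Your closing remark tries to exclude this via ``$\la a\ra$ is not $y$-invariant when $o(a)=o([a,y])$'', but this is inadequate on two counts: first, you never show one can choose $a$ with $o(a)=o([a,y])$ (one cannot, in general, since $a^{2^n}\in Z$ need not lie in $Z^{2^n}$); second, $\la a\ra$ not being $y$-invariant does not prevent $\la a,a^x\ra$ from being $y$-invariant. Your claim that $\la a,a^x\ra\cong(C_{2^n})^2$ also presupposes $o(a)=2^n$, which is unjustified.

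A clean repair is to dualize: by Lemma~\ref{pgroup}(3), $[\hat A,y]\cong[A,y]$, so pick $\beta\in[\hat A,y]$ of order $2^n\ge 8$. Then $\la\beta,\beta^x\ra\cong(C_{2^n})^2$ (by the Remark applied to $\hat A$), hence the map $A\to\mathsf U_{2^n}^2$, $a\mapsto(\beta(a),\beta(a^x))$, is onto; choose $a$ with $\beta(a)=\zeta_8$, $\beta(a^x)=\zeta_4$, whence $\beta(a^{x^2})=-\zeta_8$. Since $y$ inverts $\beta$, $\beta(a^{x^iy})=\overline{\beta(a^{x^i})}$ and $\beta^G(a)=2\Re(\zeta_8+\zeta_4-\zeta_8)=0$, giving $a\in\V G$ by Lemma~\ref{ind}. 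This replaces your unverified existence claim with a surjectivity argument that needs only the rank-$2$ structure of $\la\beta,\beta^x\ra$.
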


\subsection{$S_3$ case}

In this subsection, we classify the groups $G$ that satisfy  Setting \ref{hyp} such that  $H\cong S_3$ and $\P G=\frac{5}{6}$.

\begin{pro}\label{S3classify}
  Assume Setting \ref{hyp} and that $H$ is isomorphic to $S_3$.
  Let $P=\langle x\rangle$ and $Z = \cent{A}{y}$.
  Then $A= Z \times Z^x$. 
  Moreover, $A \subseteq \N G$ if and only if  $Z$ is isomorphic to a nontrivial subgroup of  $C_4\times (C_2)^t$,  for some  integer $t \geq 0$.
\end{pro}
\begin{proof}
 We start by observing that $Z = \cent{A}{y}=\z Q$ is nontrivial.
  As $H= \la y,y^x\ra$, we have  
   $$Z\cap Z^x=\cent{A}{y}\cap\cent{A}{y}^x=\cent{A}{\la y,y^x\ra}=\cent{A}{H}\leq \cent{A}{P}=1.$$
   An application of part (1) of Lemma~\ref{pgroup} to the action of $y$ on $\hat{A}$ yields 
  $\mu^y=\mu^{-1}$ and hence   $\mu^{xy}=\mu^{-x^2}\neq \mu^{-x}$   for every $\mu\in [\hat{A},y]- \{ 1_A \}$, 
   so $[\hat{A},y]\cap [\hat{A},y]^x=1_A$.
    Since by Lemma~\ref{pgroup} $[\hat{A},y]^\perp= Z$,
    it follows that
  \begin{center}
    $A=(1_A)^\perp=([\hat{A},y]\cap [\hat{A},y]^x)^\perp=Z Z^x=Z\times Z^x$.
  \end{center}
  Write $Z=C\times D$, with $C=\la c\ra$ such that $o(c) = \exp(Z)$. 
  Then $A=V\times W$ where $V=C\times C^x $ and $W=D\times D^x$. As $z^{xy} = z^{yx^2}= z^{x^2} =(zz^x)^{-1}$ for every $z\in Z$, we have both  $V \nor G$ and $W \nor G$.

\smallskip

  Let us suppose, first, that $A \subseteq \N G$.
  Take $d \in D$, and let $Z_0=\langle c\rangle\times \langle d\rangle$. 
  Since $o(c)=\exp(Z)$, 
  in order to conclude that $Z$ is isomorphic to a subgroup of $C_4\times (C_2)^t$ for some $t\geq 0$,
  it suffices to show that $Z_0$ is isomorphic to a subgroup of  $C_4 \times C_2$.
  Let $A_0=Z_0\times Z_0^x$, $Q_0=A_0 \rtimes \langle y\rangle$ and $G_0=A_0  \rtimes H$.
  As  $G=\cent{G}{a_0} G_0$ for  every $a_0 \in A_0$,  
  by Lemma \ref{nonvanishingsmallgroup}
  we deduce that  $A_0 \subseteq \N {G_0}$.
  Note that $(c^{x})^{y}= c^{yx^2}= c^{x^2} \neq c^x$,
  so $Q_0$ is nonabelian and hence  $G_0$ satisfies Setting \ref{hyp}.
  Now, an application of part (3) of Lemma \ref{XYtypeelementary} to $G_0$ yields that $Z_0$ is isomorphic to a subgroup of  $C_4 \times C_2$. 
    \smallskip

    Conversely, we assume  that $Z$ is isomorphic to a subgroup of $C_4 \times (C_2)^t$ for some $t \geq 0$, and 
    we show that $A \subseteq \N G$.
    Recall that $A=V\times W$,  where $V=\la c\ra\times \la c^x\ra$ with $o(c)=\exp(Z)$.
    So, $o(c)\le 4$ and $\exp(W)\leq 2$.
    If $o(c)\le 2$, then $\exp(A)\le 2$, and hence part (2) of Lemma~\ref{5/6} yields $A \subseteq \N G$.
    So, we may assume that $o(c)= 4$. 
    Let $a\in A$ and $\alpha\in\irr{A}$, and write $a=c^kc^{lx}w$, where $w\in W$, $k,l \in \mathbb{Z}$.
    By Lemma \ref{ind}, it suffices to prove that  $\alpha^G(a)\neq 0$.
    Assume the contrary, i.e.
\begin{equation}\label{eqs_3}
    \alpha^G(a)=  \alpha(a)+\alpha(a^x)+\alpha(a^{x^{2}})+\alpha(a^y)+\alpha(a^{yx})+\alpha(a^{yx^{2}})=0.  
  \end{equation}
where  $\alpha(a)\alpha(a^{x})\alpha(a^{x^{2}})=\alpha(aa^{x}a^{x^{2}})=1$
  and $\alpha(a^{y})\alpha(a^{yx})\alpha(a^{yx^{2}})=\alpha(a^{y}a^{yx}a^{yx^{2}})=1$. 
    As $\alpha$ is a linear character of $A$ and  $\exp(A)\le 4$, the values of $\alpha$ (and of any of its conjugate characters) are $4$-th roots of unity.
    Thus, an application of part (1) of Lemma 2.3 of~\cite{ZYD2022} to (\ref{eqs_3}) yields that 
    $\max\{o(\alpha^{x^i}([a,y])) \mid 0\le i\le 2 \}=4$ and hence,
    as $o(\alpha^{x^i}([a,y]))$ divides $ o([a,y])$, that $o([a,y])=4$.
    Since at least one of the elements $\alpha([a, y]^{x^i}) = \alpha(a^{-x^i})\alpha(a^{yx^i})$ has order $4$ (in $\mathbb{C}^{\times}$), for $0 \leq i \leq 2$, it follows that  $\max\{o(\alpha(a^{h}))\mid h\in H \}= 4$. 
    Since $\alpha^G = (\alpha^h)^G$, for $h \in H$, up to substituting $\alpha$ with a suitable conjugate $\alpha^h$,
    we may assume that both  $\alpha(a)$ and $\alpha(a^x)$ have order $4$, 
   so $\alpha(a^{2})=\alpha(a^{2x})=-1$.
   By~(\ref{eqs_3}),    part (2) of Lemma 2.3 of~\cite{ZYD2022} implies that
$\alpha(a^{yx^i})^2  = 1$ for $0 \leq i \leq 2$. In particular, $\alpha(a^{2y}) = \alpha(a^y)^2 = 1$.    
    Recalling that  $a=c^{k}c^{lx}w$,  as $[a,y]=[c^{k}c^{lx}w,y]=[c^{lx},y] [w, y] = c^{-lx}c^{lx^2}[w, y]$ has order $4$ and 
     $o([w,y])\le 2$ (as $[w,y]\in W$), we deduce that $l$ is odd. 
     As $o(c) = 4$, then $(c^l)^2= c^2$ and hence
     $a^2=c^{2k}c^{2x}$.
     If $k$ is even, then $c^{2k}=1$ and $a^2=c^{2x}$;
     so, recalling that $c^y=c$ and that $xy=yx^2$,
     we have $1=\alpha(a^{2y})=\alpha(c^{2xy})=\alpha(c^{2x^2})=\alpha(a^{2x})=-1$, a contradiction.
     If $k$ is odd, then $a^2=c^2c^{2x}$ and $1=\alpha(a^{2yx^2})=\alpha(a^{2xy})=\alpha(c^{2xy}c^{2x^2y})=\alpha(c^{2x^2}c^{2x})=\alpha(a^{2x})=-1$, again a contradiction. 
       Thus, $\alpha^G(a)\neq 0$, as desired.
\end{proof}

\subsection{$C_6$ case}

\begin{hy}\label{hyp1}
  Assume  Setting \ref{hyp} with $H$  isomorphic to $C_6$ and  $c(Q) \geq 3$.
\end{hy}

In this subsection, we are going to classify the groups $G$ satisfying  
Setting \ref{hyp1} and  $\P G=\frac{5}{6}$.

\begin{rem}\label{3.6}
\normalfont  If $G$ satisfies Setting~\ref{hyp1} and $A \subseteq \N G$, then  part (1) of Lemma \ref{XYtypeelementary} yields
  that $c(Q)=3$ and that $\exp(A/\z{Q}) = 4$.  
\end{rem}

First,  we deal  with the case of  Setting \ref{hyp1} where $A$ is a cyclic $H$-module: see Proposition~\ref{classcyclicmodule}.
In the following, we denote by $\zeta_n$ the primitive $n$-th root of unity $e^{\frac{2\pi}{n}i} \in \mathbb{C}^{\times}$.  
We start with  two lemmas.

\begin{lem}\label{Xtypeclassification}
  Assume Setting~\ref{hyp1}.
  Suppose that $\rank(A) = 2$ and that  $\exp(A) = 8$.
  Then $A \subseteq \N G$ if and only if 
  for a suitable generator $x$ of $P$, $aa^y = a^{4x}$ for all $a\in A$.
  \end{lem}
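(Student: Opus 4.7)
The plan is to first pin down the $\langle y, P\rangle$-module structure of $A$, reduce the lemma to a classification of the commutator map $\gamma(a) = [a,y]$ into one of a finite list of possibilities, and then distinguish the good from the bad via an induced-character computation. By the remark following Setting~\ref{hyp}, $A$ is cyclic as a $P$-module, so it is free of rank one over $R := (\mathbb{Z}/8\mathbb{Z})[P]/(1+x+x^2)$, with $P$-submodule chain $1 < \Omega_1 < \Omega_2 < A$. Lemma~\ref{5/6}(1) together with $\class(Q) > 2$ yields $\class(Q) = 3$ and $\exp(A/\z Q) = 4$; using Lemma~\ref{pgroup}(1) and the fact that both $\z Q$ and $[A,y]$ are $P$-submodules, this forces $\z Q = \Omega_1$ and $[A,y] = \Omega_2$. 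Since $\gamma$ is $P$-equivariant, it is multiplication by some $r \in R$, and the image/kernel constraints force $r = 2u$ with $u \in R^\times$. The involution $y^2 = 1$ gives $r(r+2) = 0$, i.e., $4u(u+1) = 0$; reducing modulo $2R \cong \mathbb{F}_4$ forces $u \equiv 1 \pmod{2R}$, so $r = 2 + 4t$ and modulo $8$, $r$ takes one of four values $\{2,\, 6,\, 2+4x,\, 6+4x\}$. Swapping $x$ and $x^2$ exchanges $2+4x \leftrightarrow 6+4x$ and fixes the other two. Since $aa^y = a^{4x}$ is the operator identity $\gamma = 4x-2 \equiv 6+4x \pmod 8$, the lemma reduces to $A \subseteq \N G \Leftrightarrow r \in \{2+4x,\, 6+4x\}$.

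For sufficiency, assume $r = 6+4x$ and suppose for contradiction $\alpha^G(a) = 0$ for some $a, \alpha$. Set $\zeta_i = \alpha(a^{x^i})$, $\eta_i = \alpha(a^{x^iy})$, $\delta_i = \overline{\zeta_i}\eta_i$. The formula $\gamma = 4x-2$ gives $\eta_i = \zeta_i^{-1}\zeta_{i+1}^4$ and $\delta_i = \zeta_i^{-2}\zeta_{i+1}^4 \in \mathsf{U}_4$; also $\zeta_0\zeta_1\zeta_2 = 1 = \eta_0\eta_1\eta_2$ (from $1+x+x^2 = 0$ on $A$), so $\prod_i \delta_i = 1$; and $\Sigma := \alpha^G(a) = \sum_i \zeta_i(1+\delta_i)$. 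If every $o(\zeta_i) \leq 4$, then $\zeta_{i+1}^4 = 1$ and $\eta_i = \overline{\zeta_i}$, so Lemma~\ref{6sumsof2thunity}(2) (the ``in particular'' clause) yields $\Sigma \neq 0$. Otherwise some $o(\zeta_i) = 8$, and Lemma~\ref{6sumsof2thunity}(3) combined with $\prod_i \delta_i = 1$ forces the multiset $\{\delta_0,\delta_1,\delta_2\}$ to be $\{\zeta_4,\zeta_4,-1\}$ or $\{-\zeta_4,-\zeta_4,-1\}$. The $\delta_i = -1$ term contributes $0$, so $\Sigma = (1 \pm \zeta_4)(\zeta_{j_1} + \zeta_{j_2}) = 0$ forces $\zeta_{j_2} = -\zeta_{j_1}$; but $\delta_{j_\ell} = \pm\zeta_4$ forces $o(\zeta_{j_\ell}) = 8$, so $\zeta_{j_3} = -\zeta_{j_1}^{-2}$ has order $4$, whence $\zeta_{j_3}^{-2} = -1$ and $\delta_{j_3} = -1$ then requires $\zeta_{j_3+1}^4 = 1$, contradicting the fact that $j_3+1 \in \{j_1,j_2\}$ has $o(\zeta_{j_3+1}) = 8$.

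For necessity, I exhibit $(a, \alpha)$ with $\Sigma = 0$ in each of the two bad cases. Let $a$ generate $A$ as an $R$-module (so $o(a) = 8$). If $r = 6$, then $y$ inverts $A$, so $\eta_i = \overline{\zeta_i}$ and $\Sigma = 2\sum_i \operatorname{Re}(\zeta_i)$; the choice $\alpha(a) = \zeta_8$, $\alpha(a^x) = \zeta_8^5$ forces $\alpha(a^{x^2}) = \zeta_4$, whence $\Sigma = 2\bigl(\tfrac{\sqrt 2}{2} - \tfrac{\sqrt 2}{2} + 0\bigr) = 0$. If $r = 2$, then $y$ acts as multiplication by $3$ on $A$, so $\eta_i = \zeta_i^3$; the choice $\alpha(a) = \zeta_8$, $\alpha(a^x) = -\zeta_8$ forces $\alpha(a^{x^2}) = \zeta_4$, and $\Sigma = \zeta_8(1+\zeta_4) - \zeta_8(1+\zeta_4) + \zeta_4(1+\zeta_4^2) = 0$. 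Either way, Lemma~\ref{ind} yields $\chi(a) = 0$ for some $\chi \in \irr G$, contradicting $A \subseteq \N G$. The most delicate step is the final portion of sufficiency, where the exceptional configurations allowed by Lemma~\ref{6sumsof2thunity}(3) are not excluded by order considerations alone and can only be ruled out by simultaneously invoking the product constraint $\prod\delta_i = 1$, the explicit formula $\delta_i = \zeta_i^{-2}\zeta_{i+1}^4$, and the cyclic index-tracking among the three positions.
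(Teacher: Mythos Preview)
Your proof is correct. One small citation slip: you invoke ``Lemma~\ref{5/6}(1)'' for $\class(Q)=3$ and $\exp(A/\z Q)=4$, but that item is about the existence of $y\in\norm QP$; the facts you need follow directly from the uniserial $P$-structure of $A$ together with $\class(Q)>2$ (as you essentially argue anyway), so this is not a mathematical gap.

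Your route differs from the paper's in two places. For necessity, the paper handles both bad cases $a_0^y=a_0^{-1}$ and $a_0^y=a_0^{3}$ with a \emph{single} character $\alpha$ (taking $\alpha(a_0)=\zeta_8$, $\alpha(a_0^x)=\zeta_8^5$) and observes that in either case $\alpha^G(a_0)=\sum_{j\in\{1,2,3,5,6,7\}}\zeta_8^j=0$; you use two separate but equally valid choices. For sufficiency, the paper first disposes of $A^2$ via Lemma~\ref{5/6}(2), then for $a\in A-A^2$ normalizes $\alpha$ so that $(\alpha(a^4),\alpha(a^{4x}),\alpha(a^{4x^2}))=(-1,1,-1)$ and finishes with a short real-versus-imaginary argument: the equation $\alpha^G(a)=0$ forces a real quantity to equal a purely imaginary one, hence $\alpha(a)^4=1$, contradicting $\alpha(a^4)=-1$. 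You instead treat all $a$ uniformly, invoking Lemma~\ref{6sumsof2thunity}(2) when every $o(\zeta_i)\le 4$ and Lemma~\ref{6sumsof2thunity}(3) otherwise, then eliminating the exceptional $\Delta$-configurations by cyclic index-tracking in the explicit formula $\delta_i=\zeta_i^{-2}\zeta_{i+1}^4$. The paper's endgame is shorter and more ad hoc; your $R=(\mathbb Z/8\mathbb Z)[P]/(1+x+x^2)$ framing is more structural, cleanly explaining why exactly four commutator maps arise and why they pair off under $x\leftrightarrow x^2$.
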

\begin{proof}
  We observe that  $Q \nor G$ and then both $Q'$ and $Z = \z Q$  are $P$-submodules of
  the uniserial $P$-module $A \cong C_8 \times C_8$ (see Remark \ref{3.3a}). 
  Since $\class(Q) > 2$, $Q'$ is not contained in $Z$ and 
  hence $Z = \Omega_1(A)$ and $Q' = A^2$.
  Since $\class(A^2\la y \ra) = 2$,  part (2) of Lemma~\ref{5/6} applied to the subgroup $A^2H$ yields  $A^2 \subseteq \N{A^2H}$
  and hence $A^2 \subseteq \N G$ by Lemma~\ref{nonvanishingsmallgroup}.
Let  $a_0 \in A$ be such that $o(a_0) = 8$; by the Frobenius action of $P = \la x \ra$ on $A$, we have   $A=\la a_0\ra \times \la a_0^x\ra$. 
Note that  $a_0a_0^y \in  \cent Ay = Z = \{1,  a_0^4,a_0^{4x}, a_0^{4x^{2}}\}$.

Assume first that $A \subseteq \N G$.
We claim that $a_0a_0^y \in \{a_0^{4x}, a_0^{4x^{2}}\}$.
Otherwise, $a_0a_0^y \in \{1, a_0^4\}$, so  $a_0^y=a_0^k$ for a suitable $k \in \{-1, 3\}$.
Let $\alpha\in\irr{A}$ be such that $\alpha(a_0)=\zeta_8$ and $\alpha(a_0^x)=\zeta_8^5$;
so, $\alpha(a_0^{x^2}) = (\alpha(a_0)\alpha(a_0^x))^{-1} =\zeta_8^2$.
  Hence, 
  \[
  \alpha^G(a_0)=\sum\limits_{0\leq i\leq 2}(\alpha^{x^i}(a_0)+\alpha^{x^iy}(a_0))=\sum\limits_{0\leq i\leq 2}(\alpha^{x^i}(a_0)+\alpha^{x^i}(a_0^k))=\sum\limits_{j \in \{1,2,3,5,6,7\} }\zeta_8^j=0,   
  \]
  so $a_0\in \V G$ by Lemma~\ref{ind}, a contradiction.
  Hence, for a suitable generator $x$ of $P$, $a_0a_0^{y}=a_0^{4x}$.
  We now show that the action of $x$ on $A$ is as claimed. 
  In fact, for every $a \in A$, $a=a_0^i a_0^{jx}$ for some integers $i$ and $j$.
  As $xy=yx$,
  $$aa^{y}=a_0^i a_0^{jx}(a_0^i a_0^{jx})^{y}=(a_0a_0^{y})^i(a_0a_0^{y})^{jx}=a_0^{4ix}a_0^{4jx^{2}}=a^{4x}.$$

  \medskip
  Assume conversely that for a suitable generator $x$ of $P$, $aa^{y}=a^{4x}$ for every $a \in G$.
  In particular, $aa^{y}=a^{4x}$ for every $a \in A-A^{2}$.
  Let $a \in A-A^{2}$.
  As $A^{2} \subseteq \N G$, it suffices to show that $a \in \N G$.
  Arguing by contradiction,   by Lemma~\ref{ind} there exists a character $\alpha \in \irr A$ such that
  $\alpha^G(a) = 0$.
  Let $K = \ker (\alpha)$ and observe that $Z \not\leq K$: otherwise, writing $\overline{G} = G/Z$,
  we have $\alpha \in \irr{\overline A}$ and $\alpha^{\o G}(\o a) =0$, 
  so $\overline{a} \in \V{\overline{G}} \cap \overline{A}$, against part (2) of Lemma~\ref{5/6} since $\class(\overline{Q}) = 2$.  
  Hence, as $a^4a^{4x}a^{4x^2} = 1$ and $\{\alpha(a^{4x^i}) \mid 0 \leq i \leq 2 \} = \{1, -1\}$, up to possibly exchanging $\alpha$ with some conjugate character $\alpha^{x^i}$
(observing that $\alpha^G = (\alpha^{x^i})^G$ for every $i$), 
 we can assume $\alpha(a^{4x}) =1$ and $\alpha(a^4)=\alpha(a^{4x^2}) = -1$.
  Recalling that $a^y=a^{4x}a^{-1}$, we have that $\alpha^{x^i}(a^y)=\alpha^{x^i}(a^{4x})\o {\alpha^{x^i}(a)} = \alpha(a^{4x^{1-i}})\o{\alpha^{x^i}(a)}$ for $0\leq i\leq 2$ and then 
    \[
      \begin{split}
        \alpha^G(a) &=\alpha(a) + \alpha^x(a) + \alpha^{x^2}(a) + \alpha(a^y) + \alpha^x(a^y) + \alpha^{x^2}(a^y) \\ 
        &= \alpha(a) + \alpha^x(a) + \alpha^{x^2}(a) +  \overline{\alpha(a)} -\overline{\alpha^x(a)} - \o {\alpha^{x^2}(a)}
      \end{split}  
    \]
Hence, the assumption $\alpha^G(a) = 0$
gives
$$\alpha(a) +\overline{\alpha(a)} = \overline{\alpha^x(a)} - \alpha^x(a) + \overline{\alpha^{x^2}(a)} - \alpha^{x^2}(a).$$
Observing that the first member of the above equality is real, while the second member is purely imaginary, we deduce
that $\overline{\alpha(a)} = -\alpha(a)$ and we get  $\alpha(a)^4 = \alpha(a^4) = 1$, a contradiction as $\alpha(a^4)= -1$.
Hence, $a \in \N G$ and we conclude that $A \subseteq \N G$. 
\end{proof}

\begin{lem}\label{Ytypevanishelement}
  Assume Setting~\ref{hyp1}. 
Let $A = CD$,  where  $C$ is a  $P$-submodule of rank $2$ of $A$, $D = [C,y]$ and $C \cap D = D^2 = \Omega_1(D)$.  
If $2^n = \exp(A) \geq 8$, then $A \subseteq \N G$ if and only if for a suitable generator $x$ of $P$,
$a^{2^{n-1}} = [a^2,y]^x$ for every $a \in A$.
\end{lem}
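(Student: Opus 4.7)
The plan is to reduce the biconditional to a statement about sums of roots of unity by combining the induction formula $\alpha^G(a) = \sum_{h \in H}\alpha(a^h)$ with Lemma~\ref{6sumsof2thunity}, much as in the proof of Proposition~\ref{S3classify}.

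\emph{Structural reduction.} First I would pin down the structure of $A$: by the uniserial $P$-module structure on rank-$2$ $P$-invariant subgroups (see the remark after Setting~\ref{hyp}), $C = \la c\ra \times \la c^x\ra$ with $o(c) = n$; the condition $D^2 = \Omega_1(D)$ forces $\exp(D) = 4$, and $D$ must have rank $2$ (otherwise $P$ of order $3$ would act trivially on the cyclic group $D$, as $|\Aut(D)|$ is a $2$-power, forcing $D \leq \cent{A}{P} = 1$), so $D \cong (C_4)^2$ and $\Omega_1(C) = D^2 = C \cap D$. Writing $a = cd$ with $c \in C$ and $d \in D$, and using $d^y = d^{-1}$ (from Lemma~\ref{pgroup}(1) applied to $y$ on $D \leq [A,y]$), one gets $[a,y] = [c,y] d^{-2}$ and hence $[a^2,y] = [a,y]^2 = [c^2,y] \in \Omega_1(C)$. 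Both $a \mapsto a^{n/2}$ and $a \mapsto [a^2,y]^x$ are group homomorphisms $A \to \Omega_1(C)$ vanishing on $D$ (since $\exp(D) = 4$ divides $n/2$ and $[\Omega_1(C), y] = 1$), so the asserted identity reduces to the single equation $c^{n/2} = u^x$, where $u := [c^2,y] \in \Omega_1(C) \setminus \{1\}$. Switching $x$ for $x^{-1}$ permutes the candidates $c^{(n/2)x}$ and $c^{(n/2)x^2}$, so this equation holds for some generator of $P$ if and only if $u \neq c^{n/2}$; the bad case to rule out is $u = c^{n/2}$.

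\emph{The $(\Leftarrow)$ direction.} Suppose $u \neq c^{n/2}$ and fix $x$ accordingly. For $a \in A$ and $\alpha \in \irr{A}$, put $\epsilon_i = \alpha(a^{x^i})$ and $\delta_i = \alpha([a,y]^{x^i})$. Then $\alpha^G(a) = \sum_{i=0}^{2} \epsilon_i(1+\delta_i) = \Sigma$ in the notation of Lemma~\ref{6sumsof2thunity}, with $\prod\epsilon_i = \prod\epsilon_i\delta_i = 1$ (because $aa^xa^{x^2} \in \cent{A}{P} = 1$), and applying the hypothesis to $a^{x^i}$ yields the key compatibility $\delta_{i+1}^2 = \epsilon_i^{n/2}$. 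Since $[a,y] \in D$ has exponent at most $4$, $\delta_i \in \mathsf{U}_4$. If $\delta_i \in \mathsf{U}_2$ for all $i$, Lemma~\ref{6sumsof2thunity}(1) gives $\Sigma \neq 0$. Otherwise some $\delta_i$ has order $4$, whence $\epsilon_{i-1}^{n/2} = -1$ and $o(\epsilon_{i-1}) = n \geq 8$; part~(3) of Lemma~\ref{6sumsof2thunity} then forces $\Delta = \{\zeta_4,-1\}$ or $\{-\zeta_4,-1\}$. Cyclically reindexing (by replacing $\alpha$ with a $P$-conjugate, which leaves $\alpha^G$ unchanged), we may arrange $\delta_2 = -1$ and $\delta_0 = \delta_1 = \pm\zeta_4$, so $\Sigma = (1 \pm \zeta_4)(\epsilon_0 + \epsilon_1)$. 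Then $\Sigma = 0$ forces $\epsilon_1 = -\epsilon_0$, yielding $\epsilon_1^{n/2} = (-1)^{n/2}\epsilon_0^{n/2} = -1$ (as $n/2$ is even), which contradicts $\delta_2^2 = \epsilon_1^{n/2} = 1$. Hence $\alpha^G(a) \neq 0$ for every $a$ and every $\alpha$, and by Lemma~\ref{ind} we conclude $A \subseteq \N G$.

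\emph{The $(\Rightarrow)$ direction and the main obstacle.} I would argue the contrapositive: if $u = c^{n/2}$, then $c \in \V G$. Define $\alpha \in \irr{A}$ by $\alpha(c) = \zeta_n$, $\alpha(c^x) = -\zeta_n$, and $\alpha([c,y]) = \alpha([c^x,y]) = \zeta_4$. Consistency with the bad-case relations $[c,y]^2 = c^{n/2}$ and $[c^x,y]^2 = c^{(n/2)x}$ is immediate using $n/2$ even: $\alpha([c,y])^2 = -1 = \zeta_n^{n/2}$ and $\alpha([c^x,y])^2 = -1 = (-\zeta_n)^{n/2}$. Then $\delta_0 = \delta_1 = \zeta_4$, $\delta_2 = -1$, and $\epsilon_0 + \epsilon_1 = \zeta_n - \zeta_n = 0$, so $\alpha^G(c) = (1 + \zeta_4)(\epsilon_0 + \epsilon_1) + \epsilon_2 \cdot 0 = 0$; by Lemma~\ref{ind}, $c \in \V G$, contradicting $A \subseteq \N G$. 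The main obstacle is the branch of the $(\Leftarrow)$ argument in which some $\delta_i$ has order $4$, because Lemma~\ref{6sumsof2thunity}(3) only gives a necessary condition for $\Sigma = 0$; the decisive additional input is the compatibility $\delta_{i+1}^2 = \epsilon_i^{n/2}$ coming from the hypothesis, which turns the two possible shapes of $\Delta$ into a genuine incompatibility with $\Sigma = 0$.
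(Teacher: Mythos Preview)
Your argument is correct; the paper itself does not give a proof of this lemma but simply cites \cite[Lemma~3.9]{ZYD2022}, and your approach is precisely the template used in the proof of the companion Lemma~\ref{Xtypeclassification} (reduce via Lemma~\ref{ind} to the vanishing of $\alpha^G(a)=\sum_i\epsilon_i(1+\delta_i)$, then invoke Lemma~\ref{6sumsof2thunity}). The one extra ingredient you correctly identify and exploit is the compatibility $\delta_{i+1}^2=\epsilon_i^{n/2}$, which is exactly the algebraic shadow of the hypothesis $a^{n/2}=[a^2,y]^x$ and is what rules out the configurations $\Delta=\{\pm\zeta_4,-1\}$ left open by Lemma~\ref{6sumsof2thunity}(3).
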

\begin{proof}
 See \cite[Lemma 3.9]{ZYD2022}.
\end{proof}
The next result is a key for the classification we are after.

\begin{pro}\label{classcyclicmodule}
  Assume Setting \ref{hyp1} and that $A$ is a cyclic $H$-module, say $A=\la a_0\ra^G$ for some $a_0 \in A$.  
  Then $A \subseteq \N G$ if and only if, for a suitable generator $x$ of $P$,  one of the following holds.
  \begin{description}
    \item[(1)] $\rank(A) = 2$, $A = \la a_0, a_0^x \ra \cong (C_8)^2$ and $aa^y = a^{4x}$ for all $a \in A$; i.e. $A$ is of type {\rm (T1)}. 
    \item[(2)] $\rank(A) = 4$, $A=CD$ where $C=\la a_0, a_0^x\ra\cong (C_{2^{n}})^2$,  $n\geq 3$, $D=[C,y]\cong (C_4)^{2}$ and
      $a^{2^{n-1}} = [a^2,y]^x$ for every $a \in A$; i.e. $A$ is of type {\rm (T2)}. 
    \end{description}
    Moreover, if $A \subseteq \N G$, $[A,y,y]$ is the unique irreducible  $H$-submodule of  $A$.
\end{pro}
\begin{proof}
  If either (1) or (2) holds, then $A \subseteq \N G$ follows directly by Lemma \ref{Xtypeclassification} and Lemma~\ref{Ytypevanishelement}.

  Assume, conversely,  that $A \subseteq \N G$.
  Recall that, by  Remark~\ref{3.6}  $c(Q)=3$ and  $\exp(A/\z{Q}) = 4$.
  As $Q=A \langle y\rangle$ and $A=\langle a_0\rangle^G$, $a_0\neq 1$.
  So, $\rank(A)=2$ or $\rank(A) = 4$ by Remark \ref{3.3b}.
  
  Suppose first that $\rank(A)=2$.
  Then $A$ is a uniserial $P$-module (see Remark~\ref{3.3a}) and, as $Q = A\la y \ra$ has class $3$, $\z{Q} < Q'$.  
  Recall that by part (1) of Lemma \ref{pgroup}, $Q' = [A, y]$ and  $A/\z{Q} \cong (C_4)^2$ are isomorphic $\la y \ra$-modules, on which 
  $y$ acts as the inversion. It follows that  $\z Q = \Omega_1(Q')$ and
  that $\exp(A)=8$. Hence,  by Lemma \ref{Xtypeclassification} we have case (1).
  Moreover, since  $A$  is a uniserial $P$-module of rank $2$,  $[A,y,y]=\Omega_1(A)$ is the unique irreducible $P$-submodule, and hence $H$-submodule,   of $A$.

  \smallskip
  Suppose now that $\rank(A)=4$.
  Let $C=\la a_0,a_0^x\ra$ and $D=[C,y]$; so, by Remark~\ref{3.3b}, $\rank(D) = 2$.
  Observe that $A=CC^y=CD$.
   As $C\cap C^{y}\unlhd G$, we set $\o G=G/(C\cap C^{y})$. 
   Since $A\subseteq\N G$ implies $\o A \subseteq \N {\o G}$,
   applying part (2) of Lemma \ref{XYtypeelementary} to 
   $\o A=\o C \times \o {C}^{\o y}$,
   we have that $\o C\cong (C_2)^2$ and hence $A/C \cong (C_2)^2$.
   Write $\exp(A) = o(a_0) = 2^n$ and let $Z = \z{Q} = \cent Ay$.
   By Lemma \ref{pgroup} and part (1) of Lemma \ref{XYtypeelementary} $D=[A,y]\cong A/Z\cong (C_4)^2$, so $n \geq 2$ and
   $C\cap D=D^2=\Omega_1(D)$.
    We claim that $n \geq 3$; in fact, otherwise $|A| = |C||D/C\cap D| = 2^6$ and from $|A/Z| = 2^4$ it follows
$|Z| = 2^2$, so $Z = \Omega_1(D) = \Omega_1(C)$ and $A/Z \cong C/Z \times D/Z$ is elementary abelian, against $A/Z \cong (C_4)^2$. 
An application of Lemma \ref{Ytypevanishelement} to $A$ yields that for a suitable generator $x$ of $P$, $a^{2^{n-1}}=[a^{2},y]^x$ for all $a \in A$. Hence, we have case (2). 

   Finally, by Lemma~\ref{pgroup} $y$ acts as the inversion on $[A, y] = D \cong (C_4)^2$, so $[A, y,y] = D\cap Z$ is an irreducible $H$-module.
   In order to prove that $D\cap Z$ is the only irreducible $H$-submodule of $A$, it is enough to show that $\rank(Z) = 2$ as any irreducible $H$-submodule of $A$ is contained in $Z$. 
   We observe that if $\rank(Z) > 2$, then $\rank(Z) = \rank(A) = 4$ by Remark \ref{3.3b} and all the involutions of $A$ would be fixed by $y$. 
   Write $w=[a_0,y]^{x}$ and $e=a_0^{2^{n-2}}w$. 
   Observe that $e\neq 1$ as $w\notin C$, and 
  $e^2 = a_0^{2^{n-1}}w^2 = a_0^{2^{n-1}}w^{-2}  = 1$, so $e$ is an involution.
  Moreover, 
   \begin{center}
    $[e,y]=[a_0^{2^{n-2}}w,y]=w^{2^{n-2}x^{-1}}[w,y]=w^{2^{n-2}x^{-1}}w^{-2}\ne 1$,
   \end{center} 
 because $\cent Dx = 1$. Hence, $\rank(Z) = 2$, concluding the proof. 
 \end{proof}

We observe that in part (2) of Proposition \ref{classcyclicmodule}, $n$ can be any integer larger than 2.

\begin{lem}\label{kerfreeA}
  Assume Setting \ref{hyp1}.
  If $A \subseteq \N G$ and there exists a character  $\alpha\in\irr{A}$ such that $\ker(\alpha^G)=1$, then
$A$ is a   cyclic $H$-module. 
\end{lem}
\begin{proof}
  
  Write $K=\ker(\alpha)$. 
  Then $K^\perp \cong \widehat{A/K}\cong A/K$ is a cyclic group of order $o(\alpha)$ and, as $\alpha\in K^\perp$, we see that 
   $K^\perp=\la \alpha\ra$. It follows that, for $h \in H$, $(K^{\perp})^h=(K^h)^\perp = \la \alpha^h \ra$.

   Let $P=\langle x\rangle$. As $\cent Ax = 1$, by Brauer's permutation lemma $\cent{\hat{A}}x = 1_A$ and hence $\lambda\lambda^x\lambda^{x^2}=1_A$ for all $\lambda\in \hat{A}$. So, $K^{x^2} \geq K \cap K^x$ and then
   $$K \cap K^x \cap K^y \cap K^{xy} =K_G=\ker(\alpha ^{G})  = 1 \, .$$ 
Set  $B=K\cap K^x$; so, $B$ is $P$-invariant and  $B \cap B^y = 1$. 
By  \cite[V.6.4]{huppertI} we have that  $B^{\perp }=K^\perp (K^x)^\perp = \la \alpha, \alpha^x\ra$
has rank at most 2,  
so $B^{\perp}=\la \alpha \ra \times \la \alpha^x\ra$, as $\cent{\hat{A}}{P}=1_A$ (and $\rank(B^{\perp}) \neq 1$ as a cyclic $2$-group has no automorphism of order $3$). 
  Moreover,
 $$\hat{A}= 1^\perp = (B \cap B^y)^\perp = B^\perp B^{\perp y}=\la \alpha,\alpha^x\ra \la \alpha^y,\alpha^{xy} \ra = \la \alpha,\alpha^x\ra \la [\alpha,y],[\alpha^x,y]\ra=B^\perp [\hat{A},y].$$
In particular, $\rank(A) =\rank(\hat{A}) \leq 4$ and $\exp(A) = \exp(\hat{A}) = o(\alpha)$.
Write $e=\exp (A)$.
As $A\neq 1$, $\rank(A)$ is either $2$ or $4$ by Remark \ref{3.3a}.
If $\operatorname{rank}(A)=2$, then by Remark~\ref{3.3a}  $A=\langle a \rangle \times \langle a^{x}\rangle=\langle a\rangle^{G}$,
and we are done.
Thus, we can also assume that $B\ne 1$, since  if $B =1$, then $\hat{A} = 1^{\perp} = B^{\perp} =
\la \alpha \ra \la \alpha^x\ra$, so  $\rank(A) = \rank(\hat{A}) = 2$. 
Hence, $\operatorname{rank}(A)=4$ by Remark~\ref{3.3b}.

Let $Z =\z Q$. 
As  $H\cong C_6$ and $c(Q)  >2$, $Q \nor G$ and by Remark~\ref{3.6}
we have $\exp(A/Z)=4$ and $\class(Q)=3$. In particular, $e\ge 4$.
Observe also that $B\cap Z = 1$, since $B \cap Z \nor G$ and $B \cap B^y = 1$.

By Lemma \ref{pgroup}, $[\hat{A}, y] \cong [A,y] \cong A/Z$ and, as $\hat{A} = B^\perp B^{\perp y}$,
we see that $[\hat{A}, y] = [B^\perp, y][B^{\perp y}, y] = [B^\perp, y] = \la [\alpha, y], [\alpha, y]^x\ra$ has rank two.
Since $[\hat{A}, y]$ is a nontrivial $P$-module,
we conclude that 
$Q' = [A, y] \cong [\hat{A}, y] \cong A/Z \cong (C_4)^2$.

By Lemma~\ref{pgroup}  we also know that $y$ acts as the inversion on both $Q'$ and $A/Z$. It follows that
$Q' \cap Z = \Omega_1(Q')$ and  $A/{\bf Z}_2(Q) \cong (C_2)^2$.

As $B \cong BZ/Z$ and $BZ/Z$ is a nontrivial $P$-submodule  of $A/Z \cong (C_4)^2$, we have two cases: 
 
  \smallskip
  (a):
$B \cong (C_4)^2$ and   $A = B \times Z$.
  Since $B \cong (C_4)^2$, $B\times B^y \leq A\subseteq\N G$.
  Observe that $G=\cent{G}{BB^y}(BB^yH)$, we have that
  $B\times B^y \subseteq \N {BB^yH}$ by Lemma \ref{nonvanishingsmallgroup}.
  Now, applying part (2) of Lemma~\ref{XYtypeelementary} to $BB^yH$, we get a contradiction.

\smallskip
  (b): 
  $B \cong (C_2)^2$. Note that in this case  $BZ/Z = \cent{A/Z}y$, so $BZ = {\bf Z}_2(Q)$.
  We also observe that $Z \not\leq Q'$.
  In fact, if $Z \leq Q'$, then $Z = \Omega_1(Q')$, so   $B \cap Q' = 1$;  this
  implies that $Q'/Z$ is a complement of $BZ/Z$ in $A/Z \cong (C_4)^2$, a contradiction.
  
Let $\o A = A/B$.
Take $a \in A-BZ$.
Recalling  that 
$\o A \cong \widehat{A/B} \cong B^{\perp} \cong (C_e)^2$, where $e=\exp (A)$,  
we see that $\o{a} \not\in \o Z=\o{A}^2$ and hence that $o(a) = e$. 
Let $C=\la a,a^x\ra$. Since  $[A:C] = |A|/|B^{\perp}| = |B|= 2^2$,
 $C$ is a maximal $P$-submodule of $A$.
  We also observe that $e \geq 8$, as $e \leq 4$ implies $|A| \leq 8^2$, so $|Z| = 2^2$ and $Z$ is a minimal $P$-submodule of $A$,
  which is impossible as  $Z \not\leq Q'$.

We claim that  $C^{y}\neq C$. 
  Working by contradiction, assume that $C^y = C$, so $C \nor G$. 
  As $\cent{A/C}y$ is a nontrivial $P$-submodule of the irreducible $P$-module $A/C$, we deduce that $Q/C$ is abelian and hence that $Q' \leq C$.
  We have observed above that  $Z \not\leq Q'$; moreover, $Q' \not\leq Z$ because $\class(Q) > 2$, so 
  $Z$ cannot be contained in the  uniserial $P$-module $C$.
 By the maximality of $C$, it follows that  $A = CZ$. 
 So, $C/(Z \cap C) \cong A/Z  \cong (C_4)^2$ 
 and, as $Z \cap C = \Omega_1(Q') = \Omega_1(C) $,   we conclude that $e = 8$ and $C \cong (C_8)^2$. 
 Note that $c(C \langle y\rangle)=3$,  as $A = CZ$ implies  $Q' = [A, y] = [C, y]$.   
 Since by assumption $C \subseteq A \subseteq \N G$, an application of Lemma~\ref{nonvanishingsmallgroup} and
 Lemma~\ref{Xtypeclassification} to the group $G_0= CH$ yields that $aa^y=a^{4x}$ for a suitable generator $x$ of $P$.
 As $B\cap Z=1$, then $\rank(Z) < \rank(A) = 4$ and hence  $Z$ has rank $2$ by Remark~\ref{3.3b}.
 Since $|Z| = |A/C||C\cap Z| = 4^2$, we deduce that  $Z \cong (C_4)^2$.
 Let now $z\in Z$ be such that $z^2=a^4$ and consider $g=a^{x^{-1}}a^2z$.
 Then $o(g)=8$ and 
$  gg^y= a^{x^{-1}}a^2z(a^{x^{-1}}a^2z)^y=(aa^y)^{x^{-1}}(aa^y)^2 z^2 =a^4a^4=1$, so $g^y = g^{-1}$.
 Let $\lambda \in \irr{A}$ be such that $\lambda(g)=\zeta_8$ and $\lambda(g^{x})=\zeta_8^{5}$: 
 consider, for instance, an extension to $A$ of a suitable character of $\langle g\rangle \times \langle g^{x}\rangle$. 
 Since $gg^{x}g^{x^{2}}=1$, $\lambda(g^{x^{2}})=\lambda(gg^{x})^{-1}=\zeta_8^{2}$.
 As $g^{y}=g^{-1}$, we have
 \[
 \lambda^G(g)=\sum\limits_{0\leq i\leq 2}(\lambda^{x^i}(g)+\lambda^{x^iy}(g))= \sum\limits_{0\leq i\leq 2}\lambda^{x^i}(g)+\sum\limits_{0\leq i\leq 2}\o{\lambda^{x^i}(g)}=\sum_{j\in \{ 1,2,3,5,6,7 \}} \zeta_8^{j}=0.
 \]
 Hence, by Lemma \ref{ind} we conclude that $g \in \V G$, against the assumption $A \subseteq \N G$.

 Hence $C^{y}\neq C$, and then $A=CC^{y}=\langle a\rangle^{H}$ by the maximality of $C$, so $A$ is a cyclic $H$-module.

\end{proof}

The next result is also key to classifying the  finite groups $G$ such that $\P G=\frac{5}{6}$.

\begin{pro}\label{C6Hindecomp}
  Assume Setting \ref{hyp1} and that $A \subseteq \N G$.
  If $A$ is an indecomposable $H$-module, then $A$ is a cyclic  $H$-module.
\end{pro}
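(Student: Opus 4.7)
\noindent The plan is to apply Lemma~\ref{kerfreeA}: to conclude that $A=\la a\ra^G$ for some $a\in A$, it suffices to exhibit a character $\alpha\in\irr A$ with $\ker(\alpha^G)=1$. By Pontryagin duality, $\ker(\alpha^G)=1$ is equivalent to the $H$-orbit of $\alpha$ generating $\hat A$, i.e., to $\hat A$ being cyclic as an $H$-module. So my task is to show that under the hypotheses $\hat A$ is cyclic as an $H$-module.

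The first step is a Nakayama-style computation. Because $\cent AP=1$ and $[P,y]=1$, the $H$-action equips $A$ with the structure of a module over a commutative local ring whose maximal ideal $\mathfrak m$ is generated by multiplication-by-$2$ and by $y-1$, with residue field of size $4$. Setting $\Phi=\hat A^2[\hat A,y]=\mathfrak m\hat A$ and using the duality identities $(\hat A^2)^\perp=\Omega_1(A)$ and $[\hat A,y]^\perp=\cent Ay=\z Q$ (the second from Lemma~\ref{pgroup}), we get $\Phi^\perp=\Omega_1(\z Q)$. By Nakayama's lemma, $\hat A$ is cyclic as $H$-module iff $\hat A/\Phi$ is $1$-dimensional over the residue field, equivalently $|\Omega_1(\z Q)|=4$, i.e., $\Omega_1(\z Q)$ is a minimal $G$-invariant subgroup of $A$ (a simple $H$-submodule).

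It therefore remains to deduce $|\Omega_1(\z Q)|=4$ from the indecomposability of $A$. Suppose for contradiction that $\Omega_1(\z Q)=N_1\oplus N_2$ with $N_1$ simple of order $4$ (note $P$ acts fixed-point-freely on $\z Q$ and $y$ acts trivially there, so simple $H$-submodules are exactly the $4$-element $\la x\ra$-invariant subgroups). The plan is to choose $a_1,a_2\in A$ so that $[a_i,y,y]$ is a generator of $N_i$ as $\la x\ra$-module, form the cyclic $H$-submodules $B_i=\la a_i\ra^G$, and apply Corollary~\ref{classcyclicmodule} (after checking the appropriate restriction of Setting~\ref{hyp1} holds for each $B_i H$) to conclude that $N_i=[B_i,y,y]$ is the unique minimal $G$-invariant subgroup of $B_i$; this forces $B_1\cap B_2=1$. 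If one can further show $A=B_1B_2$, then $A=B_1\oplus B_2$ is a nontrivial $H$-decomposition, contradicting indecomposability and thus forcing $|\Omega_1(\z Q)|=4$.

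The main technical obstacle is precisely the last step $A=B_1B_2$. My plan there is an inductive argument ascending through the characteristic subgroups $\Omega_i(A)\cap\z Q$ of the socle, showing at each level that every element lies in the product of the corresponding pieces of $B_1$ and $B_2$, and using the exponent and class bounds $c(Q)=3$ and $\exp(A/\z Q)=4$ from Lemma~\ref{XYtypeelementary}(1) together with the explicit cyclic-module shapes from Corollary~\ref{classcyclicmodule}. Once $A=B_1\oplus B_2$ is established, this contradicts indecomposability, $\Omega_1(\z Q)$ must be simple, and by the Nakayama step $\hat A$ is cyclic; the resulting $\alpha\in\irr A$ with $\ker(\alpha^G)=1$ feeds into Lemma~\ref{kerfreeA} to finish the proof.
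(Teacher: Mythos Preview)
Your Nakayama reduction is sound: over the commutative local ring you describe, $\hat A$ is a cyclic module iff $|\Omega_1(\z Q)|=4$, and then Lemma~\ref{kerfreeA} would apply directly to $G$. The gap is in the step where you try to show that $|\Omega_1(\z Q)|>4$ forces an $H$-decomposition of $A$. Two concrete problems arise. First, to choose $a_i$ with $[a_i,y,y]$ generating $N_i$ you need $N_i\subseteq[A,y,y]=[A,y]^2$, and nothing guarantees that both summands $N_1,N_2$ of $\Omega_1(\z Q)$ lie inside $[A,y,y]$; moreover if $N_2$ is not itself simple then a single $[a_2,y,y]$ cannot generate it as a $\la x\ra$-module. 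Second, and more seriously, the claim $A=B_1B_2$ is only a sketch. Over a commutative local ring, indecomposable modules with non-simple socle certainly exist, so the hypothesis $A\subseteq\N G$ must enter in an essential way at this point, and your inductive outline does not indicate how.

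The paper avoids all of this by applying Lemma~\ref{kerfreeA} to a quotient of $G$ rather than to $G$ itself. Since $c(Q)=3$ and $Q$ embeds in $\prod_\lambda Q/\ker(\lambda^G)$, there is some $\alpha\in\irr A$ with $c(Q/K)=3$ where $K=\ker(\alpha^G)$. Then Lemma~\ref{kerfreeA}, applied to $G/K$, gives $A=\la a\ra^G K$ for some $a\in A$. Now $\la a\ra^G$ is a cyclic $H$-module with $c(\la a\ra^G\la y\ra)=3$, so by Corollary~\ref{classcyclicmodule} it has a \emph{unique} minimal $G$-submodule $N=[\la a\ra^G,y,y]$. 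The condition $c(Q/K)=3$ forces $NK/K=[A/K,y,y]\neq 1$, so $N\cap K=1$, hence $\la a\ra^G\cap K=1$ and $A=\la a\ra^G\times K$; indecomposability then gives $K=1$. In other words, the decomposition you were trying to build by hand comes for free once one first passes to the right quotient.
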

 \begin{proof}
By Remark~\ref{3.6}, $c(Q) = 3$ and, since 
   $$Q\lesssim \bigtimes_{\lambda\in\irr{A}-\{ 1_A \}}Q/\ker(\lambda^G),$$
    there exists a
   nonprincipal $\alpha\in\irr{A}$ such that $c(Q/K)=3$,  where $K=\ker(\alpha^G)$.
   As $A \subseteq \N G$, $A/K \subseteq \N {G/K}$ and 
   by Lemma \ref{kerfreeA}, $A/K$ is a cyclic  $H$-module. So, $A= A_0K$ where  $A_0=\la a_0\ra^H$ is the cyclic $H$-module generated by a suitable element $a_0 \in A$.
   Let  $N=[A_0,y,y]$.
   Since $c(A_0 \la y\ra  )= c(Q/K) = 3$, an application of Lemma~\ref{nonvanishingsmallgroup} and  of Proposition \ref{classcyclicmodule} to $G_0=A_0 \rtimes H$ yields that 
   $N$ is a unique irreducible $H$-submodule of $A_0$.
   As 
   $$N/(N\cap K)\cong NK/K=[A_0K/K,y,y]=[A/K,y,y]>1,$$
   we deduce that $N\cap K=1$, 
   and by the uniqueness of $N$ we conclude that $A_0 \cap K = 1$, so $A=A_0 \times K$. 
   Since $A$ is an indecomposable $H$-module, then $K=1$ and $A=A_0$ is a cyclic $H$-module. 
   \end{proof}

The next three results will conclude the description of the groups $G$ satisfying Setting \ref{hyp1} and  such that $\P G=\frac{5}{6}$.

\begin{lem}\label{C6combelem}
  Assume Setting \ref{hyp1} and let $Y=\la y\ra$.
  Suppose that $A=B\times C$,  where $B=\la b\ra^H$ and $C=\la c\ra^H$, and let $D=\langle bc\rangle^{H}$.
  If $A \subseteq \N G$, then
  \begin{description}
    \item[(1)] $c(DY)=c(Q)=\max \{ c(BY),c(CY) \} = 3$ and $\exp (D)=\max \{ \exp (B),\exp (C) \}$. 
    \item[(2)] $\exp(B)\geq \exp(C)$ if and only if $c(BY)\geq c(CY)$. In particular, $\exp(B)=\exp(C)$ if and only if $c(BY)= c(CY)$.
    \item[(3)] $\operatorname{rank}(D)=\max \{ \operatorname{rank}(B), \operatorname{rank}(C) \}$.
  \end{description}
\end{lem}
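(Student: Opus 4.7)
\quad
\textit{Part (1):} I would exploit the decomposition $Q=(B\times C)\rtimes \la y\ra$ with $[B,C]=1$ to show $Q'=[B,y]\times[C,y]=(BY)'\times(CY)'$ (direct since the factors lie in the disjoint subgroups $B,C$), and then by induction $\gamma_k(Q)=\gamma_k(BY)\times\gamma_k(CY)$ (commutators with $y$ preserve the decomposition because $B,C$ are $y$-invariant). This yields $c(Q)=\max\{c(BY),c(CY)\}$. For $c(DY)=c(Q)$: $D$ surjects onto each of $B,C$ because $bc$ does, so $DY\cdot C=Q$ with $DY\cap C=D\cap C$, giving $DY/(D\cap C)\cong Q/C\cong BY$; hence $c(DY)\ge c(BY)$, symmetrically $\ge c(CY)$, while $DY\le Q$ gives the reverse. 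The exponent identity follows from $o(bc)=\max\{\exp(B),\exp(C)\}$ (since $b\in B,c\in C$ lie in direct factors) and $\exp(D)\le\exp(A)=\max\{\exp(B),\exp(C)\}$.

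\textit{Part (2):} My main tool is Corollary~\ref{classcyclicmodule}, applied to $DH$---legitimate because $c(DY)=c(Q)=3$ by Setting~\ref{hyp1} and part~(1). The Corollary forces $D=\la bc\ra^G$ to be of Case~(1) (rank $2$, $\exp=8$, relation $dd^y=d^{4x}$) or Case~(2) (rank $4$, $\exp(D)=2^n\ge 8$, relation $d^{2^{n-1}}=[d^2,y]^x$). The crucial observation is that, since $[B,C]=1$ and both $B,C$ are $G$-invariant, the defining relation of $D$ splits along $A=B\times C$ into the \emph{same} relation separately for $b$ and for $c$. Assume $c(BY)\ge c(CY)$. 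If additionally $c(CY)=3$ and $\exp(B)>\exp(C)$, then $\exp(D)=\exp(B)=2^n$ puts $D$ in Case~(2); the split relation yields $c^{2^{n-1}}=1$ (since $\exp(C)\le 2^{n-1}$), hence $[c,y]^2=[c^2,y]=1$, contradicting the order $4$ that Corollary~\ref{classcyclicmodule} forces on $[c,y]$ when applied to $CH$. If $c(CY)\le 2$, the identity $(y-1)^2=0$ on $C$ (combined with $y^2=1$, giving $2(y-1)=0$) forces $o([c,y])\le 2$, and the split relation then yields $\exp(C)\le 4$ in Case~(1) or $\exp(C)\le 2^{n-1}$ in Case~(2), so $\exp(B)>\exp(C)$. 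The converse direction follows by symmetry.

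\textit{Part (3):} The projections $D\twoheadrightarrow B$, $D\twoheadrightarrow C$ give $\rank(D)\ge\max\{\rank(B),\rank(C)\}$. Conversely, by part~(2) I may assume $\exp(B)\ge\exp(C)$, so $c(BY)=c(DY)=3$; applying Corollary~\ref{classcyclicmodule} to both $BH$ and $DH$, together with $\exp(D)=\exp(B)$, pins down the Type of each module uniquely, from which $\rank(D)=\rank(B)=\max$.

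\textbf{Main obstacle:} The most delicate step will be the ``split the relation'' argument in part~(2): while the clean splitting of the Type-specific relation on $D$ into relations on $b$ and $c$ is essentially forced by $[B,C]=1$ and the $G$-invariance of $B,C$, extracting contradictions uniformly across the four sub-cases (Case~(1)/(2) for $D$ paired with $c(CY)=3$ or $c(CY)\le 2$) takes some care. A secondary subtle point is the implication $c(XY)\le 2\Rightarrow o([X,y])\le 2$ for cyclic $H$-modules $X$, which is what permits the exponent comparison to go through even when Corollary~\ref{classcyclicmodule} does not directly apply to modules of class $\le 2$.
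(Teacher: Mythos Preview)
Your plan for parts (1) and (2) is sound and largely parallels the paper's approach, with one pleasant improvement: your quotient argument for $c(DY)\ge\max\{c(BY),c(CY)\}$ in part (1) is purely group-theoretic (via $DY/(D\cap C)\cong BY$), whereas the paper establishes $[bc,y,y]\neq 1$ by invoking Corollary~\ref{classcyclicmodule} on $BH$ to force $[b,y,y]\neq 1$. Your route avoids the nonvanishing hypothesis at this step. In part (2) both you and the paper use the ``split the relation along $B\times C$'' idea; the paper phrases the contradiction slightly differently (exhibiting $(bc)^{e/2}\notin\langle[bc,y]^2,[bc,y]^{2x}\rangle$), but the substance is the same.

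There is, however, a genuine gap in part (3). You write that applying Corollary~\ref{classcyclicmodule} to $BH$ and $DH$, ``together with $\exp(D)=\exp(B)$, pins down the Type of each module uniquely''. This is false precisely in the case you most need it: when $\rank(B)=\rank(C)=2$. Then Corollary~\ref{classcyclicmodule} applied to $BH$ forces $\exp(B)=8$, hence $\exp(D)=8$; but Corollary~\ref{classcyclicmodule} allows both a rank-$2$ module (Case~(1)) and a rank-$4$ module (Case~(2) with $n=3$) at exponent $8$. So exponent alone does not determine $\rank(D)$, and your projection bound only gives $\rank(D)\ge 2$. You must rule out $\rank(D)=4$ by a direct computation: assuming $D$ is Case~(2), split $[(bc)^2,y]=(bc)^{4x^i}$ to get $[b^2,y]=b^{4x^i}$; combine this with the Case~(1) relation $bb^y=b^{4x}$ for $B$ (which gives $b^2b^{2y}=1$) to obtain $(b^4)^y=b^{4x^i}$, contradicting $[b^4,y]=1$ (since $b^4\in\Omega_1(B)\subseteq\z{BY}$ and $\cent AP=1$). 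This is exactly what the paper does, and it is not a consequence of the ``type is determined by exponent'' heuristic.
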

\begin{proof}
  By Remark~\ref{3.6}, $c(Q)=3$.
  
 (1) As $A=B\times C$ where $B$, $C$ are $G$-invariant subgroups of $A$ and $Q=AY$ where $Y=\langle y\rangle$, 
 $$\gamma_{i+1}(Q)=[A,\underbrace{Y,\cdots,Y}_i]=[B,\underbrace{Y,\cdots,Y}_i]\times [C,\underbrace{Y,\cdots,Y}_i]$$
 for $i\ge 0$.
 Therefore, $\max \{ c(BY),c(CY) \}=c(AY)=c(Q)=3$.
 Without loss of generality, we may assume that $c(BY)=3$.
 To show that $c(DY)=3$, since $c(DY)\le c(Q)=3$, it suffices to show that $[bc,y,y]\ne 1$, so  $[D,Y,Y]\ne 1$.
 An application of part (1) of Lemma \ref{pgroup} yields $[bc,y,y]=[b,y,y][c,y,y]$.
 As $A \subseteq \N G$, by Lemma \ref{nonvanishingsmallgroup} $B \subseteq \N {BH}$  and 
 the last statement of Proposition \ref{classcyclicmodule} applied to $BH$ gives $[b,y,y]\neq 1$.
 Therefore, as $[b,y,y]\in B$, $[c,y,y]\in C$ and  $B \cap C = 1$, we deduce that $[bc,y,y]\ne 1$.
 
 Finally, we observe that $o(b)=\exp (B)$ and $o(c)=\exp (C)$,
 so $\exp(D)=o(bc)=\max \{ o(b),o(c) \}=\max \{ \exp (B),\exp(C) \}$. 

 (2) Set $e=\max \{ \exp (B),\exp (C) \}$.
Then by (1)  $c(DY)=c(Q)=3$ and  $o(bc)=\exp(D) = e$.
 For $F=\langle f\rangle^H\leq A$, as $y$ acts as the inversion of $[F,y]$ (by Lemma~\ref{pgroup}), $c(FY)=3$ if and only if $[f,y]^2\ne 1$.

 Assume, working by contradiction, that $e = \exp(B)\geq \exp(C)$ and $c(BY)<c(CY)=3$.
 So, $[b,y]^2=1$.
 Observe that  $(bc)^{\frac{e}{2}} \notin C$. 
 Since $[bc,y]^{2}=[b,y]^{2}[c,y]^{2} = [c,y]^{2}\ne 1$ (as $c(CY)=3$) and $B \cap C = 1$, 
 \[
  (bc)^{\frac{e}{2}}=b^{\frac{e}{2}}c^{\frac{e}{2}}
  \notin \la [c,y]^2,[c,y]^{2x}\ra=\la [bc,y]^2,[bc,y]^{2x}\ra.
\]
This implies, in particular, that $\rank(D) = 4$ and,  
 applying Proposition \ref{classcyclicmodule} to $DH$, we deduce  that $bc\in\V{DH}$.
 Hence,  $bc\in\V G$ by Lemma \ref{nonvanishingsmallgroup}, a contradiction.

 Assume now, again working by contradiction, that $3=c(BY)\geq c(CY)$ and $\exp(B)<\exp(C)$. 
Since $[bc,y]^{2}=[b,y]^{2}[c,y]^{2}$ and  $[b,y]^{2}\ne 1$ (as $c(BY)=3$), we have
\[
  (bc)^{\frac{e}{2}}=b^{\frac{e}{2}}c^{\frac{e}{2}}=
  c^{\frac{e}{2}}\notin \la [b,y]^2,[b,y]^{2x}\ra=\la [bc,y]^2,[bc,y]^{2x}\ra 
\]
which as above  implies $bc\in \V G$, a contradiction.

(3) We first show that $\operatorname{rank}(D)=4$ provided that either $B$ or $C$ has rank $4$.
To see this, we may assume that $\rank(B)=4$. 
If $\rank(D)=2$, then $D = \la bc, (bc)^x \ra$ and, for suitable integers $i$ and $j$, 
\[
[b,y][c,y]=[bc,y]=(bc)^i(bc)^{jx}=b^ib^{jx}(c^ic^{jx}).
\]
As $A=B\times C$, where $B$ and $C$ are $G$-invariant subgroups of $A$,
$[b,y]=b^ib^{jx}$ (and $[c,y]=c^ic^{jx}$). So $b^y \in \la b, b^x\ra$ and hence $B = \la b, b^x\ra$ has rank $2$,  a contradiction.

We next show that $\operatorname{rank}(D)=2$ provided that both $B$ and $C$ have rank $2$.
In fact, otherwise  $\operatorname{rank}(D)=4$ by Remark~\ref{3.3b}.
Without loss of generality, we may also assume that $c(CY)\le c(BY)=3$.
Hence, by (2), $\exp (B)\ge \exp (C)$.
As $A \subseteq \N G$, 
an application of Lemma \ref{nonvanishingsmallgroup} to both $HB$ and $HC$ yields that $B \subseteq \N{HB}$ and $C \subseteq  \N {HC}$.
Since $\operatorname{rank}(B)=2$, it follows by Proposition \ref{classcyclicmodule} that $bb^{y}=b^{4x}$ (for a suitable generator $x$ of $P$) and $o(b)=\exp(B)=8$.
As $o(c)=\exp(C)\le \exp(B)=o(b)$, then  $o(bc)=\exp(D)=8$.
Now, applying Lemma \ref{nonvanishingsmallgroup} to $HD$, we have that $D \subseteq \N {HD}$.
Since $\operatorname{rank}(D)=4$, Proposition \ref{classcyclicmodule} implies that $[(bc)^{2},y]=(bc)^{4x^{i}}$ where $i=1$ or $-1$.
Observing that 
\[
  [b^{2},y][c^{2},y]=[(bc)^{2},y]=(bc)^{4x^i}=b^{4x^{i}}c^{4x^{i}}  
\]
and that $A=B\times C$ where $B$ and $C$ are $G$-invariant, we deduce that  $b^{-2}(b^{2})^{y}=[b^{2},y]=b^{4x^{i}}$.
As $bb^{y}=b^{4x}$, then  $b^{2}(b^{2})^y=(bb^{y})^{2}=b^{8x}=1$ where the last equality holds as $o(b)=8$.
Thus,
$[b^2, y] = [b, y]^2 = (b^{-2}bb^y)^2 = b^{-4} = b^4$, so $b^4 = b^{4x^i}$ with $i \in \{1, -1\}$, and hence
$1 \neq b^4 \in \cent AP$, a contradiction.
\end{proof}

\begin{lem}\label{C6combineHind}
  Assume Setting \ref{hyp1} and let $Y=\la y\ra$.
  Suppose that $A=B\times C$, where $B$ and $C$ are cyclic  $H$-modules such that $\exp(B)\geq \exp(C)$.
  If $A \subseteq \N G$, then one of the following happens.
\begin{description}
  \item[(1)] $c(BY)=c(CY)=3$,  and $B$ and $C$ are isomorphic $H$-modules.
  \item[(2)] $c(BY)=3>c(CY)$, $\rank(B)\geq\rank(C)$ and $\exp(B)>\exp(C)$. 
    In addition, if $\rank(B)=2$, then $y$ acts as the inversion on $C$. 
\end{description}
\end{lem} 
\begin{proof}
  Let $B=\la b\ra^G$,  $C=\la c\ra^G$ and $D=\langle bc\rangle^{G}$. So, $e=\exp (B) = \exp (A)$, as $\exp(B)\ge \exp(C)$.
  Observe that 
  $c(Q)=3$ by Remark~\ref{3.6}.
By Lemma \ref{C6combelem},  $3=c(DY)=c(BY)\ge c(CY)$, $\exp (D)=e$ and $\operatorname{rank}(D)=\max \{ \operatorname{rank}(B),\operatorname{rank}(C) \}$.
  Also, since $A \subseteq \N G$, an application of Lemma \ref{nonvanishingsmallgroup} to $HB$, $HC$ and $HD$ yields that $B \subseteq \N {HB}$, $C \subseteq \N {HC}$ and $D \subseteq \N {HD}$.

  We show first that if $c(BY)=3$ and $bb^y =  b^{\frac{e}{2}x^{i}}$ for $i\in\{-1,1\}$, then $[b^{2},y] \neq b^{\frac{e}{2}x}$ .
  In fact,  $bb^y=b^{\frac{e}{2}x^{i}}$ implies that  $\operatorname{rank}(B)=2$.  
  As $BY$ satisfies Setting \ref{hyp1} and $B \subseteq \N {HB}$, Proposition \ref{classcyclicmodule} yields $e=8$ and 
  $bb^y = b^{4x^i}$. So, $[b,y] = b^{-2} b^{4x^i}$ and $[b^2, y]  = [b, y]^2= b^{-4} = b^4 \neq b^{4x}$, as $1 \neq b^4 \not\in \cent AP$.

  \medskip
 (1) Suppose that $c(BY)=c(CY)=3$. 
  So,  $\exp(D)=\exp(C)=\exp(B)=e$ by Lemma~\ref{C6combelem}.
 We first show that  $\rank(B)=\rank(C)$.
 Working by contradiction, we may assume that $\rank(B)=4$, so $\rank(D)=4$,  
 and that $\rank(C)=2$.
 As $D \subseteq \N {HD}$ and $C \subseteq \N {HC}$ and both $HD$ and $HC$ satisfy  Setting~\ref{hyp1},
 Proposition \ref{classcyclicmodule} yields that $e = 8$ and,
 for a suitable generator $x$ of $P$, $[b^2, y] [c^2, y] = [(bc)^{2},y]=(bc)^{4x} = b^{4x} c^{4x}$, so $[c^2, y] = c^{4x}$,
 and  also that   $cc^y=c^{4x^i}$ for $i\in\{-1,1\}$, against the second paragraph of this proof.

 Next, we show that $B$ and $C$ are  isomorphic $H$-modules. 
 Assume first that $\operatorname{rank}(B)=\operatorname{rank}(C)=2$.
 Then $\operatorname{rank}(D)=2$.
Since $D\subseteq\N {HD}$, an application of Proposition \ref{classcyclicmodule} to $HD$
yields that $e=8$ and, for a suitable generator $x$ of $P$, we may assume that $(bc)(bc)^{y}=(bc)^{4x}$.
Thus, $bb^y=b^{4x}$ and $cc^y=c^{4x}$. 
Defining   $\varphi(b)=c$, $\varphi(b^{x})=c^{x}$,  
it is routine to check that $\varphi$ extends to an isomorphism of  $H$-modules from $B$ to $C$.

 Assume now  that $\operatorname{rank}(B)=\operatorname{rank}(C)=4$.
 Then $\operatorname{rank}(D)=4$.
 Since $D\subseteq\N {HD}$, an application of Proposition \ref{classcyclicmodule} to $HD$
yields that, for a suitable generator $x$ of $P$,  $[(bc)^{2},y]=(bc)^{\frac{e}{2}x}$.
Hence, as above $[b^{2},y]=b^{\frac{e}{2}x}$ and $[c^{2},y]=c^{\frac{e}{2}x}$.
 Let $b_0=wb^{\frac{e}{4}x}$,  where $w=[b,y]$; so,  $w =b^{-\frac{e}{4}x}b_0$.
 Then $o(b_0)=2$, as $b_0^2 = 1$ and $b_0 \neq 1$ by Remark \ref{3.3b}. 
 We show that $[b_0,y]=b^{-\frac{e}{2}x}b^{-\frac{e^{2}}{16}x^{-1}}$ and  that 
 $B=\langle b,b^{x}\rangle \times \langle b_0,b_0^{x}\rangle$.
 
In fact, we have 
 \[
  [b_0,y]=[wb^{\frac{e}{4}x},y]=[w,y] [b^{\frac{e}{4}x},y]=w^{-2}w^{\frac{e}{4}x}=b^{-\frac{e}{2}x}b^{-\frac{e^{2}}{16}x^{2}}.
 \]
 where the third equality holds as $[w,y]=w^{-2}$ (as $y$ inverts $[B, y]$),  and the last equality holds as
 $\frac{e}{4}\ge 2 = o(b_0)$.
 In particular, it follows that  $[b_0,y]\ne 1$.
 As $\Omega_1(\langle b,b^{x}\rangle)=\langle w^{2},w^{2x}\rangle$, $y$ acts trivially on $\Omega_1(\langle b,b^{x}\rangle)$ by part (1) of Lemma \ref{pgroup} and hence, as $[b_0,y]\ne 1$ and  $\langle b_0,b_0^{x}\rangle$ is an irreducible $P$-module, 
 we deduce that  $\langle b_0,b_0^{x}\rangle \cap \Omega_1(\langle b,b^{x}\rangle)=1$. 
Since $B = \la b, b^x, b^y, b^{xy} \ra = \la b, b^x, b_0, b_0^x \ra$, we get  $B=\langle b,b^{x}\rangle \times \langle b_0,b_0^{x}\rangle$.
 Similarly, $C=\langle c,c^{x}\rangle\times \langle c_0,c_0^x\rangle$, with  $[c,y]=c^{-\frac{e}{4}x}c_0$ and $[c_0,y]=c^{-\frac{e}{2}x}c^{-\frac{e^{2}}{16}x^{-1}}$,  where $c_0=[c,y]c^{\frac{e}{4}x}$.
 Defining  $\varphi(b)=c$, $\varphi(b^{x})=c^{x}$, $\varphi(b_0)=c_0$ and $\varphi(b_0^{x})=c_0^{x}$, one readily checks  
 that $\varphi$ extends to  an isomorphism of  $H$-modules from $B$ to $C$.

\smallskip
 (2) Suppose now that  $c(BY)=3>c(CY)$.
 Then by Lemma~\ref{C6combelem} $e=\exp(D)=\exp(B)>\exp(C)$ and $c(DY)=3$.
 Next, we show that $\rank(B)\geq \rank(C)$.
 In fact, if $\rank(B) < \rank(C)$ then $\rank(B)=2$ and $\rank(C)=4$; so  $\rank(D)=4$.
 Since $HD$ satisfies Setting \ref{hyp1} and  $D=\langle bc\rangle^{G} \subseteq \N {HD}$, 
 Proposition \ref{classcyclicmodule} yields that, for a suitable generator $x$ of $P$, 
 $[(bc)^{2},y]=(bc)^{\frac{e}{2}x}$.
 Similarly, applying Proposition \ref{classcyclicmodule} to $HB$, we have that ($e = 8$ and) $bb^y=b^{\frac{e}{2}x^i}$ for $i\in\{-1,1\}$.
 So, $[b^2,y]=b^{\frac{e}{2}x}$ and $bb^y=b^{\frac{e}{2}x^i}$ for $i\in\{-1,1\}$, against the second paragraph of this proof.

 Finally, we assume that  $\rank(B)=2$; then $\rank(C) \leq 2$ and $\rank(D)=2$.
 Then, using again  Proposition \ref{classcyclicmodule}, $o(bc)=8$ and  for a suitable generator $x$ of $P$  
 \[
  bc(bc)^y=(bc)^{4x}. 
  \]
  Thus $cc^y = c^{4x}$ and, since $o(c)<o(b)=8$, we deduce that  $c^y=c^{-1}$. Then, it easily follows that $y$ acts as the inversion on
  $C$. 
\end{proof}

\begin{pro}\label{C6classify}
  Assume Setting \ref{hyp1}.
  Then $A\subseteq \N G$ if and only if $A=B\times C$ where $B$ and  $C$ are normal subgroups of  $G$ such that $\exp (B)>\exp (C)$, 
   $[C,y,y]=1$,   and either  $y$ acts as the inversion on $C$ and $B$ is a homogeneous   $H$-module of type {\rm (T1)},  or $B$ is a homogeneous $H$-module of type \rm{(T2)}.
 \end{pro}
 

\begin{proof}
  We  write $A=B\times C$ where $B$ is a direct product of  indecomposable $H$-modules,
  say $B_i$ for $1\le i\le k$, such that $c(B_i \la y\ra)>2$  and $C$ is a $H$-module such that $[C,y,y]=1$.
  Note that by Lemma \ref{nonvanishingsmallgroup} and Remark~\ref{3.6},
  $c(B_i\la y \ra) = 3$ for every $1 \leq i \leq k$.  
  By Proposition \ref{C6Hindecomp}, Lemma \ref{C6combineHind} and Proposition \ref{classcyclicmodule}, we can assume that $k\geq 2$.

  \smallskip
 
  Suppose first that $A\subseteq \N G$. 
For $i\in \{ 2,\ldots,k \}$, let $S_i=H(B_1\times B_i)$.
  Observe that $G=\cent{G}{B_1\times B_i} S_i$, and hence an application of Lemma \ref{nonvanishingsmallgroup} yields $B_1 \times B_i\subseteq \N {S_i}$.
  As $S_i$ satisfies Setting \ref{hyp1} and $B_1$ and $B_i$ are indecomposable $H$-modules,  
  by  Proposition \ref{C6Hindecomp}, Lemma \ref{C6combineHind} and Proposition \ref{classcyclicmodule} we deduce, using again Lemma~\ref{nonvanishingsmallgroup}, 
  that the $B_j=\la b_j\ra^G$, for $1 \leq j \leq k$,  are pairwise isomorphic $H$-modules of type (T1) or (T2).
In the same way, applying Lemma~\ref{C6combelem} to $(B_1 \times \la c \ra^G)H$, for $c \in C$, we have that   
$\exp (B)>\exp (C)$.
Finally, we assume that $B$ is direct product of isomorphic $H$-modules of type (T1) and we consider an element $c \in C$. 
Then part (2) of Lemma~\ref{C6combineHind} applied to $(B_1 \times \la c \ra^G)H$ yields $c^y = c^{-1}$. So, $y$ acts as the inversion on $C$. 
 
  \smallskip
  Conversely, we assume that $A = B \times C$, where $B = B_1 \times \cdots \times B_k$ with $B_i$ isomorphic $H$-modules of
  type (T1) or of type  (T2), and in the first case $y$ acts as the inversion on $C$,   for
  $1 \leq i \leq k$, $\exp(B) > \exp(C)$  and $[C, y,y] = 1$.  
  We prove that $A \subseteq \N G$.  
  Let $a_{0}\in A$, $A_{0}=\la a_{0}\ra^G$ and $T= A_{0} H$. 
  Write $a_{0}=b_{0}c_{0}$, where $b_{0}\in B$ and $c_{0}\in C$, and set $B_{0}=\langle b_{0}\rangle^{G}$, $C_{0}=\langle c_{0}\rangle^{G}$.
  Therefore, $A_{0}\le B_{0}\times C_{0}$.
  By Lemma \ref{nonvanishingsmallgroup}, it suffices to show $a_{0}\in \N T$.
  Also,  by part (2) of Lemma \ref{5/6} we can assume $c(A_{0} \la y\ra)=3$.
  As $c(C_{0}  \la y\ra)\leq c(C \la y\ra)\leq 2$ and $(B_{0} \times C_{0})H$ satisfies Setting~\ref{hyp1}, 
  by Lemma \ref{C6combelem} $c(B_{0}\la y\ra)=3$.
  
  Suppose first that the modules $B_i$ are of type (T1). 
  Then $gg^{y}=g^{4x}$ for all $g \in B_i$, $1 \leq i \leq k$.
  Let $b \in B_{0}$ and write $b=\prod_{i=1}^kg_i$,  where $g_i\in B_i$.
  Then 
  \[
  bb^{y}=\prod_{i=1}^{k}g_i(\prod_{i=1}^{k}g_i)^y=\prod_{i=1}^{k} (g_i g_i^y)=\prod_{i=1}^{k} g_i^{4x}=(\prod_{i=1}^{k} g_i)^{4x}=b^{4x}.
  \]
  This implies that  $\rank(B_{0})=2$ and, since $c(B_{0} \la y\ra)=3$ and $\exp(B)=8$, also  that $\exp(B_{0}) =8$.
Let now $a \in A_{0}$ and write $a = bc$ with $b \in B_{0}$ and $c \in C_{0}$. 
Recalling that $y$ acts as the inversion on $C$ and that $\exp(C) < \exp(B) = 8$, we have 
  \begin{center}
    $aa^{y}=bc(bc)^{y}=(bb^{y})(cc^{y})=b^{4x}=b^{4x}c^{4x}=(bc)^{4x}=a^{4x}$.
  \end{center}
  This implies that  $\rank(A_{0})=2$ and, as $c(A_{0}\la y \ra) = 3$, that $\exp(A_{0}) \geq 8$; so
  $\exp(A_{0}) = \exp(B) = 8$. 
  Since $T = A_0H$ satisfies Setting~\ref{hyp1}, an application of Proposition \ref{classcyclicmodule} to $T$ yields $a_{0}\in\N {T}$, as desired.

  Suppose now that the modules $B_i$ are of type (T2). 
  Then $\exp(B) = 2^n \geq 8$ and  $g^{2^{n-1}}=[g^{2},y]^{x}$ for all $g \in B_i$.
Let $a \in A_{0}$ and write $a = bc$ with $b \in B_{0}$, $b=\prod_{i=1}^kb_i$ where $b_i\in B_i$, and $c \in C_{0}$.   
Observe that $o(c)<\exp(B)=2^n$ and that  $o([c,y])\leq \exp([C,y])\leq 2$ (recall that $[C,y,y]=1$ and that $y$ acts as the inversion on $[C,y]$ by Lemma \ref{pgroup} if $[C,y]\neq 1$).
So
 \begin{equation}\label{ytype}
    [a^{2},y]^x=[(c\prod_{i=1}^kb_i)^2,y]^x=[c^2,y]^x\prod_{i=1}^k[b_i^2,y]^x=\prod_{i=1}^k b_i^{2^{n-1}}=c^{2^{n-1}}(\prod_{i=1}^kb_i)^{2^{n-1}}=a^{2^{n-1}}.
   \end{equation} 
   Since  $y$ acts as the inversion on $[A_{0},y]$ (by part (1) of Lemma \ref{pgroup}), then
   $[A_{0}, y] = \la [a_{0}, y], [a_{0}^x, y] \ra$ has rank $2$ 
   and, recalling that   $A_0$ is a $P$-module and that $c(A_{0} \langle y\rangle)=3$, we deduce that  $[A_{0},y]\cong (C_4)^{2}$.
   Thus,  (\ref{ytype}) implies that $\exp(A_{0}) =2^{n} = \exp(A)$.
   We next show that $\rank(A_{0}) = 4$. 
Assuming the contrary, then $A_{0}$ has rank $2$, so it is a  uniserial  $P$-module.
   As $c(A_{0} \langle y\rangle)=3$ (and both $\cent{A_{0}}{y}$ and $[A_{0},y]$ are $P$-submodules of $A_{0}$), 
then $\cent{A_{0}}{y}\leq [A_{0},y]$.
   Since by Lemma \ref{pgroup} $A_{0}/\cent{A_{0}}{y}\cong[A_{0},y]$, we deduce that  $A_{0} \cong (C_8)^{2}$.
   So, $\exp(A) = \exp(A_{0}) =  8$.
As above, we write  $a_{0} = b_{0}c_{0}$, with $b_{0} \in B$, $b_{0} =\prod_{i=1}^{k}g_i$, $g_i \in B_i$,   and $c_{0} \in C$.  
 As $A=(\bigtimes_{i=1}^k B_i)\times C$ and  $[a_{0},y]\in \la a_{0},a_{0}^x\ra$, we have  $[g_i,y]\in \la g_i,g_i^x\ra$ for $1\leq i\leq k$.
   Also, since  $c(A_{0} \la y\ra)=3$, there exists an index $d$,  $1\leq d\leq k$, such that $c(\la g_{d}\ra^G \la y\ra)=3$ by Lemma \ref{C6combelem}.
   Let  $D =\langle g_{d}\rangle^{G}$.
   Since $c(D\langle y\rangle)=3$ and $\operatorname{rank}(D)=2$,
   the arguments used above similarly yield  that $D\cong (C_8)^{2}$ and $[D,y]\cong (C_4)^{2}$. 
    Since $B_d$ is of type (T2) and exponent $8$, it easily follows that $|B| = |D|\cdot 2^2$, so $D$ is a maximal $P$- submodule of $B_{d}$. 
Writing  $B_{d}=\langle b_{d}\rangle^{G}$, we observe that $b_d \not\in D$, as $D$ is $H$-invariant, so   $B_{d}=D \langle b_{d},b_{d}^{x}\rangle$.
Thus, $D \cap \langle b_{d}, b_{d}^{x}\rangle$ is a $P$-submodule  of $D$ of order $16$,
   and hence $D \cap \langle b_{d}, b_{d}^{x}\rangle=[D,y]$.
   Since $[D,y]\le [B_{d},y]\cong (C_4)^2$, we conclude that $[B_{d},y] = [D, y] \le \langle b_{d}, b_{d}^{x}\rangle$, 
giving $\rank(B_d) = 2$, a contradiction.
   
Thus, $\rank(A_{0}) = 4$. As $A_{0}=\la a_{0},a_{0}^x\ra \la [a_{0},y],[a_{0},y]^x\ra$ and for every $a \in A_0$
$[a^{2},y]^x= a^{2^{n-1}}$, where $2^n = \exp(A_0)$,   
  Proposition \ref{classcyclicmodule} yields $a_{0}\in \N T$.
\end{proof}

 \begin{rem}
\normalfont We remark that, using the notation of Proposition \ref{C6classify}, if $y$ acts as the inversion on $C$ and $[C,y,y]=1$, then $\exp(C)$ divides $4$. 
 \end{rem}

\section{Proof of the main theorem}
%
We recall that, given a prime number $p$,  an element $g$ of a finite group $G$ can be uniquely written in the form
$g = g_p g_{p'}$,  where $g_p$ is a $p$-element, $g_{p'}$ is a $p'$-element and
$g_p$ and $g_{p'}$ commute.
The element $g_p$ is called the \emph{$p$-part of $g$}; we will use this notation below. 

\begin{lem}[\mbox{\cite[Proposition 3.2]{ZYD2022}.}]\label{56connect}
  Let $A$ be an abelian normal subgroup of the group $G$ such  that $[G:A] \leq 6$ and $[G:A] \neq 5$.
  Then, for $a \in A$:
  \begin{description}
  \item[(1)] if $[G:A] \neq 3$ and $G$ has  abelian Sylow $3$-subgroups, then $a \in \V G$ if and only if $a_2 \in \V G$;
      \item[(2)] if $[G:A] = 3$, then  $a \in \V G$ if and only if $a_3 \in \V G$.
  \end{description}
\end{lem}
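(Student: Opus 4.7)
The plan is to use Lemma~\ref{ind} to reduce everything to analyzing the induced sum $\alpha^G(a) = \sum_{g \in G/A}\alpha(a^g)$: we have $a \in \V G$ if and only if $\alpha^G(a) = 0$ for some $\alpha \in \irr A$. Writing $A = A_p \times A_{p'}$ (with $p = 2$ in (1) and $p = 3$ in (2)), $\alpha = \alpha_p\alpha_{p'}$ and $a = a_p a_{p'}$, the sum becomes $\alpha^G(a) = \sum_i \zeta_i\eta_i$ where $\zeta_i = \alpha_p(a_p^{g_i})$ is a $p$-power root of unity and $\eta_i = \alpha_{p'}(a_{p'}^{g_i})$ is a $p'$-root of unity. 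The easy implication $\Leftarrow$ is immediate: given $\beta \in \irr{A_p}$ with $\beta^G(a_p) = 0$, extend $\beta$ trivially on $A_{p'}$ to obtain $\alpha \in \irr A$ satisfying $\alpha^G(a) = \beta^G(a_p) = 0$, so $a \in \V G$.

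For the $\Rightarrow$ direction I would apply Lemma~\ref{vs} (Lam--Leung) to decompose the vanishing sum $\sum_i \zeta_i\eta_i = 0$ of $n = [G:A]$ roots of unity into atomic vanishing subsums. Only 2-atoms (pairs summing to zero) and 3-atoms of shape $\{\lambda,\lambda\omega,\lambda\omega^2\}$ with $\omega = \zeta_3$ can occur. Two structural facts drive the analysis: (a) in a 2-atom, writing $\zeta_i\eta_i = -\zeta_j\eta_j$ and factoring $-1$ into its $p$- and $p'$-parts forces $\zeta_i = -\zeta_j$ and $\eta_i = \eta_j$ when $p = 2$ (since $-1$ is not a $p'$-root of unity); (b) in a 3-atom, because $\omega$ is a $3$-power root of unity, the $p$-parts of the three summands are constant when $p = 2$ while the $p'$-parts are constant when $p = 3$.

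With these observations in hand, case (2) ($n = 3$) is a single 3-atom, so by (b) the $\eta_i$'s coincide and $\eta_0\sum_i \zeta_i = 0$ forces $\alpha_3^G(a_3) = 0$, hence $a_3 \in \V G$. Case (1) with $n \in \{1,2,4\}$ has $n$ not a multiple of $3$, so only 2-atoms can appear; then (a) makes the $\zeta$'s cancel pairwise and $\alpha_2^G(a_2) = \sum_i \zeta_i = 0$, so $a_2 \in \V G$.

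The main obstacle is case (1) with $n = 6$, where the decomposition could in principle be $2+2+2$ (concluding as above) or $3+3$. I would rule out the $3+3$ possibility using the abelian-Sylow-$3$ hypothesis: let $K$ be the (normal) Sylow $3$-subgroup of $G/A$ and $M$ its preimage in $G$, and choose a $3$-element $x$ in a Sylow $3$-subgroup $P$ of $G$ whose image generates $K$, so that $M = A\la x\ra$. Since $P$ is abelian and contains $A_3$, $x$---and hence every element of $M$---centralizes $A_3$. The subgroup $M$ has exactly two cosets in $G$, so pigeonhole applied to any triple $\{g_{i_0},g_{i_1},g_{i_2}\}$ of indices forming a 3-atom yields two of them in a common $M$-coset, say $h := g_{i_k}^{-1}g_{i_l} \in M$. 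Then $[a_{2'}^{g_{i_k}},h]$ has trivial $3$-part (its $3$-part is $[a_3^{g_{i_k}},h] = 1$), so the ratio $\eta_{i_l}/\eta_{i_k} = \alpha_{2'}([a_{2'}^{g_{i_k}},h])$ has order coprime to $3$ and cannot be a nontrivial cube root of unity, contradicting the $3$-atom structure. Therefore only $2+2+2$ decompositions can arise, and one concludes $a_2 \in \V G$ as before.
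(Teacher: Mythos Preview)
The paper does not give its own proof here; it simply cites \cite{ZYD2022}. So I evaluate your argument on its own merits.

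Your overall strategy is sound: the reduction via Lemma~\ref{ind} to $\alpha^G(a)=\sum_i\zeta_i\eta_i$, the easy direction, case~(2), and case~(1) for $n\in\{1,2,4\}$ are all correct. (A minor quibble: Lemma~\ref{vs} only gives the numerical constraint $n=\sum n_ip_i$; the actual decomposition of a vanishing sum into minimal pieces is a separate fact due to Mann/Conway--Jones. For $n\le 4$ this does yield only $2$- and $3$-atoms, since there is no minimal vanishing sum of length~$4$.) Your key group-theoretic observation for $n=6$---that an abelian Sylow $3$-subgroup $P\ge A_3$ makes $M=AP$ centralize $a_3$, so $\alpha_3(a_3^{g_i})$ depends only on the $M$-coset of $g_i$---is exactly right.

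There is, however, a genuine gap in the $n=6$ case. Your dichotomy ``$2{+}2{+}2$ or $3{+}3$'' is incomplete: minimal vanishing sums of length~$6$ exist, for instance
\[
-\omega-\omega^{2}+\zeta_5+\zeta_5^{2}+\zeta_5^{3}+\zeta_5^{4}=1+(-1)=0,
\]
and your $3$-atom pigeonhole argument does not apply to such a piece. The repair is to use your $M$-coset observation \emph{before} decomposing rather than after. Writing $\rho_i=\alpha_3(a_3^{g_i})$, you have shown $\rho_1=\rho_2=\rho_3=:\rho$ and $\rho_4=\rho_5=\rho_6=:\rho'$, so
\[
\alpha^G(a)=\rho\,S+\rho'\,S',\qquad S=\sum_{i\le 3}\sigma_i,\ \ S'=\sum_{i\ge 4}\sigma_i,
\]
with each $\sigma_i=\zeta_i\eta_i/\rho_i$ a root of unity of order coprime to~$3$. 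By Lemma~\ref{vs} three such roots cannot sum to~$0$, so $S\neq 0$; then $\rho/\rho'=-S'/S$ lies in both $\mathsf U_{3^c}$ and the $3'$-cyclotomic field, forcing $\rho=\rho'$ and hence $\sum_{i=1}^{6}\sigma_i=0$. Now the six $\sigma_i$ have order coprime to~$3$, and one checks (via the classification of short minimal vanishing sums, cf.\ \cite{poonen1998}) that no minimal piece of length $3$, $4$, $5$ or $6$ can occur in $\mathsf U_m$ with $3\nmid m$: length~$3$ would need $\omega$, length~$5$ would leave an impossible singleton, and every minimal length-$6$ sum involves the prime~$3$. Hence the decomposition is $2{+}2{+}2$, and your $2$-atom analysis~(a) gives $\alpha_2^G(a_2)=\sum_i\zeta_i=0$ as desired.
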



\begin{lem}\label{p>2abel}
 Let $G$ be a finite group such that  $\P G < \mathfrak{a}$. 
 Then for every odd prime $p$, the Sylow $p$-subgroups of $G$ are abelian. 
\end{lem}
\begin{proof}
  Let $G$ be a minimal counterexample and $P$ a  nonabelian Sylow $p$-subgroup of $G$ for some odd prime $p$.
  Recalling that $\P{\o G} \leq \P G$ for every factor group $\o G$ of $G$, the minimality of $G$ implies that $\mathbf{O}_{p'}(G) = 1$.  
Since $\P G < \mathfrak{a}$, by Theorem \ref{zyd} $A=\N G$ is an abelian normal subgroup of $G$. 
  Hence, $\P G = 1 -\frac{1}{[G:A]} < \mathfrak{a}$ implies $[G:A]=m\leq 6$.
  So, $m= 6$, $p = 3$  and $A \leq P$, so $[G:P] = 2$ and $[P:A]=3$.  Using again the minimality of $G$, we deduce that $P'$ is a minimal normal subgroup of $G$.
  It follows that $P' \leq \z P$ and hence  (as $[G:P] = 2$) $|P'| = 3$. So, by Problem 2.13 and Problem 6.3 of \cite{isaacs1976} every nonlinear
  irreducible character of $P$ vanishes on $P - \z P$. Clifford theory hence yields that every $\chi \in \irr G$ such that $P' \not\leq \ker{\chi}$
  vanishes on $P - \z P$, so $A = \z P$ and $[P:A]\geq 3^2$, a contradiction.  
\end{proof}

%
%

\begin{lem}\label{Agroups}
  Let $G$ be an $\mathcal A$-group; let  $Z = \z G$ and $F = \fitt G$. Then 
  \begin{description}
\item[(1)] $\fitt{G/Z} = F/Z$ and $\z{G/Z} = 1$.
  \item[(2)] If $\P G < \mathfrak{a}$, then $\N G = F$ and $\P{G/Z} = \P G$.
  \item[(3)] If $[G:F] = 5$, then  $\P G < \mathfrak{a}$ if and only if $[F:Z]$ is not divisible by $6$.
  \end{description}
\end{lem}
\begin{proof}
(1) As $G$ is a $\mathcal{A}$-group, then  $G' \cap Z = 1$ (\cite[VI.14.3(b)]{huppertI}).
Let $W/Z = \z{G/Z}$; then $[W, G] \leq Z \cap G' = 1$, so $W \leq Z$ and hence $W = Z$, i.e.  $\z{G/Z} = 1$.
Let $X/Z = \fitt{G/Z}$; $X/Z$ is abelian, so $[X, X] \leq G' \cap Z = 1$, thus $X$ is a normal abelian subgroup of $G$ and $X \leq F$.
Hence, $X = F$. 

\medskip
(2) First, we show that $\N G = F$. 
As $\P G < \mathfrak{a}$, by Theorem \ref{zyd}, $\N G$ is an abelian normal subgroup of $G$, so $\N G \leq F$.
If $G$ is nilpotent, than $G$ (being an $\mathcal{A}$-group) is abelian, and clearly $\N G = G = F$. 
If $m = [G:\N G]$ is a prime, then $G$ is nonabelian and $\N G = F$.
Finally, if $m  \in \{4, 6\}$, then  for every $a \in F$   Lemma~\ref{56connect} yields   $a \in \N G$, because  $a_2$ is a
nonvanishing element of $G$ by Lemma~\ref{brough}. Hence, $\N G = F$.

In order to show that $\P{G/Z} = \P G$, we observe that 
$G/Z$ is a $\mathcal{A}$-group and $\P{G/Z} \leq \P G < \mathfrak{a}$ so, by what we have just proved and  part (1),  we have $\N{G/Z} = F/Z$, hence
$\P{G/Z} = \P G$.

\medskip
(3) Assume $[G:F] = 5$.  By (1), $G/Z$ is a Frobenius group with abelian kernel $F/Z$. If $6$ divides $|F/Z|$, then there is a normal subgroup $N$ of  $G$,
with $Z \leq N$, such that $\o G = G/N$ is a Frobenius group with kernel $\o F = \o U \times \o V$, where $\o U$ is a $2$-group,
$\o V$ is a $3$-group and $\o U$ and $\o V$ are minimal normal subgroups of  $\o G$.
Then,  by Lemma~3.3 of~\cite{ZYD2022} $\N{\o G} = \o U \cup \o V$ and $\P G \geq \P{\o G} > \mathfrak{a}$.

Assume now that $6$ does not divide $[F:Z]$ and observe that $F = G' \times Z$ (\cite[VI.14.7(b)]{huppertI}). For every  $a \in G'$ and $\alpha \in \irr F$,
$\alpha^G(a)$ is a sum of five $|G'|$-th roots of unity, and $|G'|$ is coprime both to $5$ and to either $2$ or $3$.
Hence, $a \in \N G$ by Lemma~\ref{vs} and Lemma~\ref{ind}, so $F \subseteq \N G$ by Lemma \ref{nonvancent}, proving that
$\P G \leq \frac{4}{5} < \mathfrak{a}$.
\end{proof}

We are now ready to prove Theorem~A, which we state again.

\begin{thm}
  Let $G$ be a finite group, $Q\in\syl{2}{G}$ and $P\in\syl{3}{G}$.
%
  Then $\P G<\P{A_7}=\mathfrak{a}$ if and only if 
  \begin{description}
  \item[(a)] $G$ is an $\mathcal{A}$-group such that  $[G:\fitt{G}]=m\leq 6$,  and  
   $|\fitt{G}/\z{G}|$ is not divisible by $6$ if $m = 5$.
 \item[(b)]  $Q$ is nonabelian and $G$ has an abelian normal subgroup $A$ such that  
    $Q\cap A  = Z_0 \times D$,
    where $Z_0 = \cent{Q \cap A}P  \leq \z{G}$  and $D = [Q \cap A, P]$, 
  and  one of the following holds:
  \begin{description}
  \item[(b1)] $|G/A|=4$ and $Q \cap A=\z{Q}$. 
 \item[(b2)] $G/A\cong S_3$,  
   and, for some  $x\in P- A$,   $1 \neq D =Z\times Z^{x}$, where $Z=\cent{D}{Q}$ is either elementary abelian or isomorphic
   to $C_4\times (C_2)^t$, with $t \geq 0$. 
 \item[(b3)] $G/A\cong C_6$, 
   $1 \neq D = B\times C$ with $B$ and $C$ are normal subgroups of  $G$ such that  
   $[C,Q,Q]=1$   and,  if  $B \neq 1$, then  $\exp (B)>\exp (C)$ and either every $y \in Q -A$ acts as the inversion on $C$ and 
   $B$ is a homogeneous $G/A$-module of type {\rm (T1)},  or $B$ is a homogeneous $G/A$-module of type {\rm (T2)}. 
\end{description}
\end{description}
  \end{thm}
 \begin{proof}
  Suppose that $\P G<\mathfrak{a}$.
  By Theorem \ref{zyd}, $A=\N G$ is an abelian normal subgroup of $G$, so $\P G = 1-\frac{1}{[G:A]}$ and hence $[G:A]=m\leq 6$.
  Also,  for every odd prime number $p$,  $G$  has abelian Sylow $p$-subgroups by Lemma \ref{p>2abel}.
  Let $Q$ be a Sylow $2$-subgroup of $G$. 
  If $Q$ is abelian then $G$ is an $\mathcal{A}$-group, and $A = \fitt G$ by Lemma \ref{Agroups}.
  Hence, $[G:\fitt G] = m \leq 6$ and, recalling also part (3) of Lemma \ref{Agroups}, we have  case  (a). 

  We can hence assume that $Q$ is nonabelian, 
 so $m$ is even. If $m$ is a power of $2$,  then  $G$ has a factor group isomorphic to $Q$ and $m = 4$ by Lemma \ref{vP}.
  Thus, $m \in \{4, 6\}$.
  
Clearly,  $Q \cap A$ is the (unique)  Sylow $2$-subgroup of $A$; let  $Z_0 = \cent{Q \cap A}P$ and $D = [Q \cap A, P]$, where $P\in \syl 3G$. 
By coprime action, $Q\cap A = Z_0 \times D$.
As $AP \nor G$, by the Frattini argument we get $G = A \norm GP$, and this implies that  both $Z_0$ and $D$ are normal subgroups of $G$.  
Let  $K$ be  the  $2$-complement of $A$.
By part (4) of Lemma~\ref{5/6}, we have  $Z_0K/K \leq \z{G/K}$,  so $[Z_0, G]\leq Z_0 \cap K = 1$  and hence $Z_0 \leq \z G$.    

If $m = 4$, then $P \leq A$ and 
$Z_0 =  Q\cap A = \z Q$ and we have case (b1).

Now, we assume that $m = 6$ and  
we  first prove that   $Q' \leq D$. In fact, setting  $L = DKP$ and observing that $L \nor G$, if $Q' \not\leq D$ then  $G/L \cong Q/D$ is  a nonabelian $2$-group, 
so by Lemma~\ref{vP} $\N{G/L}  {\color{blue} = } X/L$, where $X/L = \z{G/L}$.
Hence $A = \N G \leq X \cap A$, so $A \leq X$, which is  a contradiction 
because $4$ divides $[G:X]$. Thus, $Q' \leq D$ and, in particular, $D \neq 1$. 
Let  now $\o G = G/Z_0K$. 
Note that $\o A \subseteq \N{\o G}$ and  that $\N{\o G}$ is a subgroup of $\o G$ by
Theorem~\ref{zyd}, so $\o m = [\o G : \N{\o G}]$ divides $6$.  Since  $\o Q \cong Q$ is nonabelian, the argument in the second
paragraph of this proof  yields $\o m = 6$ and hence $\P{\o G} = \frac{5}{6}$.
Therefore, since $D$ and $\o A$ are isomorphic  $G/A$-modules,  without loss of generality we can assume  $Z_0K= 1$.
Thus, $A = D \neq 1$,  $|P| = 3$ and $\cent AP = 1$.
By part (1) of Lemma~\ref{5/6} there  exists an involution  $y \in \norm QP$,
such that $Q = A \la y \ra$ and such that $H = P\la y \ra$ is a complement of $A$ in $G$; in particular, 
$|H| = 6$. If $H \cong S_3$, then by Proposition~\ref{S3classify} we have case (b2). 
In order to conclude this part of the proof, we can hence assume $H \cong C_6$ and $\class(Q)  \geq 3$, since if $\class(Q) \leq 2$, then $[D,Q,Q] = 1$, so we have case (b3) with $D = C$ and $B= 1$.  
Hence, $G$ satisfies all the conditions of Setting~\ref{hyp1}, and Proposition~\ref{C6classify} yields case (b3). 

\medskip
We now prove the other implication.
In case (a), $G$ is an $\mathcal{A}$-group with $m = [G:\fitt G] \leq 6$.
If $m \neq 5$, then $\fitt G \subseteq \N G$ by Lemma~\ref{56connect} and Lemma~\ref{brough}.
Thus, $\P G \leq 1 - \frac{1}{m}  < \mathfrak{a}$.
If $m = 5$, then $\P G < \mathfrak{a}$ by part (3) of Lemma~\ref{Agroups}. 

So, we assume (b) and we prove  that $A \subseteq \N G$, which implies $\P G \leq \frac{5}{6} < \mathfrak{a}$. Let $K$ be the $2$-complement of $A$.
We observe that $A = D \times Z_0 \times K$, with  $Z_0 \leq \z G$ and $D\nor G$ (as $G = A \norm GP$). 
For $a \in A$, we write $a = d z k$ with $d \in D$, $z \in Z_0$ and $k \in K$, and we observe that by Lemma~\ref{56connect}
$a \in \N G$ if and only if $a_2 = dz \in \N G$.
If we are in case (b1), then $Q\cap A \subseteq \N G$ by Lemma~\ref{brough}.
For the remaining two cases, we observe that by Lemma~\ref{nonvancent} $dz \in \N G$ if and only if $d \in \N G$  and that,
setting $\o G = G/Z_0K$,  $d \in \N G$ if and only $\o d \in \N{\o G}$ by Lemma~\ref{dpnv}.
Since $\o G$ satisfies Setting~\ref{hyp}, in case (b2) we have $\o D \subseteq \N{\o G}$ by Proposition~\ref{S3classify},
hence $A \subseteq \N G$.
So, we are left with case (b3) and $G/A \cong C_6$. If $B = 1$, then $\class(\o Q) \leq  \class(Q) = 2$ and hence  $\o D \subseteq \N{\o G}$ by part (2) of Lemma~\ref{5/6}.
If $B \neq 1$, then the assumptions on $B$ and $C$ imply that $\class(Q) = 3$. 
Therefore, $\o G$ satisfies all the conditions of Setting \ref{hyp1}, so $\o D \subseteq \N{\o G}$ by Proposition~\ref{C6classify}
and hence $A \subseteq \N G$, concluding the proof. 
\end{proof}

Finally, we give a  description of the  groups of case (a) of Theorem A.

\begin{thm}\label{Asmall}
  Let $G$ be an $\mathcal{A}$-group with trivial center. 
  Then $2 \leq [G:\fitt G] \leq 6$ if and only if $G$ has an  abelian normal  subgroup $A$ such that one of the following occurs.
  \begin{description}
  \item[(1)] $G$ is a Frobenius group with kernel $A$ and complement $H \cong C_m$, $2 \leq m \leq 6$.
  \item[(2)] $G = A \rtimes Q$, with $A$ of odd order and either
    \begin{description}
    \item[(2i)]$Q = \la y \ra \cong C_4$ and $A = C \times D$, with  $C = \cent A{\la y^2 \ra}$,  $D = [A, \la y^2 \ra]$ and 
    both $C \rtimes Q/\la y^2 \ra$ and $D \rtimes Q$ are Frobenius groups; 
    or
    \item[(2ii)]  $Q = \{1, y_1, y_2, y_3 \}$, with $y_i$ involutions, and $A = A_1 \times A_2 \times A_3$, $A_i = \cent A {y_i}$,  and
    $(\prod_{j \neq i} A_j)\rtimes \la y_i\ra$ are  Frobenius groups,  for $i = 1,2, 3$.
    \end{description}
    
  \item[(3)] $G = A P_0Q_0$,  for $P_0, Q_0 \leq G$,  $|P_0| = 3$, $|Q_0| = 2$,
    $[P_0, Q_0] = 1$, $A = B \times C \times D$ with $B = \cent A{P_0}$, $C = \cent A{Q_0}$,
    and both  $BD \rtimes Q_0$ and $CD \rtimes P_0$ are Frobenius groups. 
  \item[(4)] $G = A PQ$, with $P \in \syl 3G$ and $Q \in \syl 2G$, $|Q| = 2$, $|P/P\cap A| = 3$, $[P, Q] = P$, $A = C \times D$, with $C = \cent AP$ and $D = [P, A] = \cent AQ$
    normal subgroups of $G$, and  both $CP \rtimes Q$ and $D\rtimes P/P\cap A$ are  Frobenius groups.
  \end{description}
\end{thm}
  \begin{proof}
    Let $G$ be an $\mathcal{A}$-group such that $Z = \z G = 1$. 
    If $G$ satisfies one of the conditions $(1)-(4)$, then $G$ is nonabelian,  $A \leq \fitt G$ (actually, $A = \fitt G$) and $[G:A] \leq 6$.

    \smallskip
    Conversely, writing $A = \fitt G$, we assume that  $ 2 \leq  [G:A] \leq 6$ and show that one of $(1)-(4)$ follows.
    If $[G:A] = r \in \{2,3,5\}$ and $R \in \syl rG$, then $\cent AR = 1$ and  $A \cap R= 1$,  because $Z = 1$ and $R$ is abelian.  
    So, $G$ is a Frobenius groups with kernel $A$ and cyclic complement of order $r$.

    Next, we assume $[G:A] = 4$. Then $G = AQ$ where $Q \in \syl 2G$ and, as above, $\cent AQ = 1$ and  $A \cap Q= 1$, so $|Q| = 4$.
    If $Q$ is cyclic, then either $G$ is a Frobenius group with complement $Q$ or, denoting by $Y$ the subgroup
    of order $2$ of $Q$, by coprimality $A = C \times D$, where $C = \cent AY$ and $D = [A, Y]$ are nontrivial normal subgroups of $G$,
    and both $CQ/Y$ and $DQ$ are Frobenius groups.
    If instead $Q$ is elementary abelian, say $Q = \{1, y_1, y_2, y_3 \}$, with $y_i$ involutions, it is well known that $A$
    is the  product of the normal  subgroups $A_i = \cent A{y_i}$, $i =1, 2,3$, of $G$.
    Note that,  for distinct indices $i$, $j$ and $k$,  $A_i \cap A_jA_k = \cent{A_j}{y_i}  \cap \cent{A_k}{y_i} =1$ because any
    two involutions of $Q$ generate $Q$ and $\cent AQ = 1$; hence, $A = A_1 \times A_2 \times A_3$. 
    Observe also that at most one of the subgroups $A_i$ can be
    trivial,  and that when this happens  $G$ is the direct product of two Frobenius groups with complements of order $2$.
    In any case, $(\prod_{j \neq i} A_j)\la y_i\ra$ are  Frobenius groups,  for $i = 1,2, 3$.

    So, we assume $[G:A] = 6$. Let $P \in \syl 3G$, $Q \in \syl 2G$ and observe that $\cent AP \cap \cent AQ \leq Z = 1$. 
    As $AP \nor G$, then $G = A \norm GP$ and hence   $AQ \cap \norm GP$ contains a Sylow $2$-subgroup $Q_0$ of $\norm GP$ (because $[\norm{G}{P}:AQ\cap \norm{G}{P}]=[G:AQ]=3$).
    But $[Q_0 \cap A, P] \leq \oh 2G \cap P = 1$, so $Q_0 \cap A \leq \cent AQ \cap \cent AP = 1$ and hence $Q = (Q \cap A) \rtimes Q_0$,
    with $|Q_0| = 2$.

If  $G/A \cong C_6$, by a similar argument we get   $P = (P \cap A) \rtimes P_0$,
with $|P_0| = 3$ and $P_0 \leq \norm GQ$; so $[P_0, Q_0] =1$ and $P_0Q_0$, a cyclic group of order $6$, is a complement of $A$ in $G$.  Since both $P$ and $Q$ are abelian, coprimality considerations give
$A = B \times C \times D$ with $B = \cent AP = \cent A{P_0}$,  $C = \cent AQ = \cent A{Q_0}$ and by the three subgroups lemma $D = [A, P_0, Q_0] = [A, Q_0, P_0]$ (note also that $\cent A{P_0} \cap \cent A{Q_0} =1$ implies $\cent A{Q_0} = [\cent A{Q_0}, P_0] \leq [A, P_0]$, so $\cent {[A, P_0]}{Q_0} = \cent A{Q_0}$). 
Since $Z = 1$, we have that both $BDQ_0$ and $CDP_0$ are Frobenius groups with complements, respectively,  of order $2$ and of order $3$. 

Finally, we assume  $G/A \cong S_3$. Observe that
$\cent G{A\cap Q}$ is a normal subgroup of $G$ that contains $AQ$; but $AQ/A$ is a non-normal maximal  subgroup of $G/A$, hence 
$A \cap Q \leq Z = 1$ and $Q = Q_0$ has order $2$. Let $C = \cent AP$ and $D = [P, A]$. 
Since $P$ is abelian, $A=C\times D$.
We also have $P\cap A = [P \cap A, Q]$, because $\cent {P\cap A}Q \leq Z = 1$, so $[P, Q] = P$ (as $G/A$ is nonabelian).
Hence, $\cent {CP}Q = \cent CQ \cent PQ = 1$, so  $CPQ$ is a Frobenius group with kernel $CP$  and complement $Q$. 
Since $\cent DP =  C \cap D = 1$, also the semidirect product $D \rtimes P/P\cap A$ is a Frobenius group with complement $P/P\cap A$ of order $3$.
We finally observe that $D \leq \cent AQ$,  because if $[D, Q] \neq 1$ then $PQ/P \cap A$ would act fixed point freely on $[D, Q]$, giving
$G/A \cong PQ/(P\cap A)$ cyclic, a contradiction.
So, by Dedekind's law $\cent AQ = D (\cent AQ \cap C) = D$, concluding the proof.     
  \end{proof}
  

  From part (1) of Lemma~\ref{Agroups} we have the following.
  
  \begin{cor}\label{corAgr}
    Let $G$ be a nonabelian $\mathcal{A}$-group.
    Then $[G:\fitt G] \leq 6$ if and only if $G/\z G$ satisfies one of the conditions $(1)$ to $(4)$ of Theorem~\ref{Asmall}. 
  \end{cor}


\begin{thebibliography}{999}
  
 \bibitem{brough2016}
	J. Brough,
	\emph{Non-vanishing elements in finite groups},  J. Algebra {\bf  460} (2016), 387--391. 


      \bibitem{DPS} S. Dolfi, E. Pacifici and L. Sanus,
        \emph{On zeros of characters of finite groups}, 
        Group Theory and Computation, Indian Stat. Inst. Ser., Springer, Singapore, 2018, pp. 41--58.
        
      \bibitem{DPSS} S. Dolfi,  E. Pacifici, L. Sanus and P. Spiga,
        \emph{On the vanishing prime graph of solvable groups},  
        J. Group Theory {\bf 13} (2010),  189--206.

        
        

  \bibitem{gruninger}
	M. Gr\"{u}ninger,
	\emph{Two remarks about non-vanishing elements in finite groups}, 
	J. Algebra {\bf 460} (2016), 366--369. 


  \bibitem{harris}
	M. Harris,
	\emph{On $p'$-automorphisms of abelian $p$-groups}, 
	Rocky Mountain J. Math.  {\bf 7} (1977),
        751--752. 



  \bibitem{huppertI}
	B. Huppert,
	\emph{Endliche gruppen I}, 
	 Springer-Verlag, Berlin, 1983.


  

  \bibitem{isaacs1976}
	I.M. Isaacs,
	 \emph{Character theory of finite groups}, AMS Chelsea Publishing, Providence, RI, 2006.




  \bibitem{isaacs1999}
	I.M. Isaacs, G. Navarro and T.R. Wolf,
	\emph{Finite group elements where no irreducible character vanishes}, 
	J. Algebra {\bf 222} (1999),
        413--423. 
 

    
  \bibitem{lam2000}
	T.Y. Lam and K.H. Leung,
	\emph{On vanishing sums of roots of unity}, 
	J. Algebra {\bf 224} (2000),
        91--109.
        
      \bibitem{MT}
        A. Moret\'o and P.H. Tiep,
        \emph{Nonsolvable groups have a large proportion of vanishing elements}, Israel J. Math. {\bf 254}   (2023), 229--242.

        
  \bibitem{MTV}
	L. Morotti and H.P. Tong-Viet,
	\emph{Proportions of vanishing elements in finite groups},
	 Israel J. Math. {\bf 246}   (2021), 441--457.

        \bibitem{ZYD2022}
        Y. Zeng, D. Yang and S. Dolfi,
        \emph{On the proportion of vanishing elements in finite groups},
        J. Algebra {\bf 614} (2023), 825--847.
 
\end{thebibliography}
\end{document}